\documentclass[reqno]{amsart}
\usepackage{cite, amssymb}
\vfuzz2pt 
\hfuzz2pt 
\newtheorem{thm}{Theorem}
\newtheorem{cor}{Corollary}
\newtheorem{lem}{Lemma}
\newtheorem{prop}{Proposition}
\theoremstyle{definition}
\newtheorem{defn}{Definition}
\theoremstyle{remark}
\newtheorem{rem}{Remark}
\usepackage{color}

\newcommand{\supp}{\mathop{\rm supp}}
\newcommand{\Real}{\mathbb R}
\newcommand{\eps}{\varepsilon}
\renewcommand{\kappa}{\varkappa}
\newcommand{\lm}{\lambda}

\newcommand{\cH}{\mathcal{H}}
\newcommand{\cI}{\mathcal{I}}
\newcommand{\dmn}{\mathop{\rm dom}}
\newcommand{\en}{{\lm\nu}}


\begin{document}

\title[On coupling constant thresholds in one dimension]{On coupling constant thresholds\\ in one dimension}


\author{Yuriy Golovaty}%
\address{Department of Mechanics and Mathematics,
  Ivan Franko National University of Lviv\\
  1 Universytetska str., 79000 Lviv, Ukraine}
\curraddr{}
\email{yuriy.golovaty@lnu.edu.ua}

\subjclass[2000]{Primary 34L10, 34L40; Secondary  81Q10}

\begin{abstract}
The threshold behaviour of negative eigenvalues for Schr\"{o}dinger operators of the type
\begin{equation*}
H_\lm=-\frac{d^2}{dx^2}+U+ \lm\alpha_\lm V(\alpha_\lm \cdot)
\end{equation*}
is considered. The potentials $U$ and $V$ are real-valued bounded functions of compact support, $\lm$ is a  positive parameter, and positive sequence $\alpha_\lm$ has a finite or infinite limit as $\lm\to 0$. Under certain conditions on the potentials there exists
a bound state of $H_\lm$ which is absorbed at the bottom
of the continuous spectrum. For several cases of the limiting behaviour of  sequence~$\alpha_\lm$, asymptotic formulas for the  bound states are proved and the first order terms are computed explicitly.
\end{abstract}

\keywords{1D Schr\"{o}dinger operator, coupling constant threshold, negative eigenvalue, zero-energy resonance, half-bound state,  $\delta'$-potential,  point interaction}
\maketitle

\section{Introduction}

In the seventies and eighties  of the last century there has been a considerable interest in the study of the low-energy behaviour of Hamiltonians $\cH_\lm=-\Delta+\lm V$, especially of the small $\lm$ behavior of  bound states and zero-energy resonances, as well in the study of the absorption of eigenvalues by the continuous spectrum.
It is known that in three dimensions there is no bound state of $\cH_\lm$, if $V$ is a sufficiently shallow well. In contrast to this case, a suitable short-range potential  in one or two dimensions can produce a bound state for all small $\lm$. For the case of the one-dimensional Hamiltonian $-\frac{d^2}{dx^2}+\lm V$, Simon~\cite{Simon:1976} proved that if $\int_{\Real}V(x)\,dx\leq0$ and $\int_{\Real}(1+|x|^2)|V(x)|\,dx<\infty$, then the operator has a unique negative eigenvalue $e_\lm$ for all positive $\lm$ such that $e_\lm$ approaches zero  as $\lm\to 0$.
This result has been extended by Klaus \cite{Klaus:1977} to the class of potentials $V$ obeying $\int_{\Real}(1+|x|)|V(x)|\,dx<\infty$.
Moreover, if $\int_{\Real}e^{a|x|}|V(x)|\,dx<\infty$, for instance, $V$ is of compact support, then the eigenvalue $e_\lm$ is analytic in $\lm$ at $\lm=0$.

The question of how negative eigenvalues are absorbed at the bottom of the essential spectrum  has been discussed by many authors \cite{BlankenbeclerGoldbergerSimon1977, Rauch1980, KlausSimonAnPh:1980, KlausSimonComMathPh:1980, AlbeverioGesztesyHoegh-Krohn:1982, Holden1985, BolleGesztesyWilk:1985, GesztesyHolden:1987, JensenMelgaard2002, Gadylshin2002, Gadylshin2004, BorisovGadylshin2006}. It is worth noting that the threshold behaviour is also strongly dependent on the dimension of space~\cite{ KlausSimonAnPh:1980}. In two dimensions $e_\lm$ is exponentially small  as $\lm\to 0$; in one dimension, $e_\lm\sim c\lm^2$ with $c\neq0$ if $\int_{\Real}V\,dx<0$ and $e_\lm\sim c\lm^4$ if $\int_{\Real}V\,dx=0$.
The threshold behaviour as a general perturbation phenomenon has been investigated in \cite{Simon:1977, Klaus:1982}, where in particular one gives an answer  to the question what will happen with the eigenvalue $e_\lm$ in the limit -- will it disappear or remain as an eigenvalue embedded in the continuous spectrum?

In \cite{Klaus:1982},  some general
theorems about the existence and threshold behavior of eigenvalues for self-adjoint operators of the form $A+\lm B$ have been applied to several special cases. One of them has been concerned with the threshold behavior of Hamiltonian $-\frac{d^2}{dx^2}+U+\lm V$, where $U$ and $V$ are  of compact support. The Schrodinger operator has a small negative bound state in the limit of weak coupling (not necessarily a unique one), if a certain  relation between the potentials $U$ and $V$ holds. Namely,  $-\frac{d^2}{dx^2}+U+\lm V$ has the coupling constant threshold $\lm=0$,
if ope\-ra\-tor $-\frac{d^2}{dx^2}+U$ possesses a zero-energy resonance with half-bound state $h$ and $\int_\Real Vh^2\,dx<0$. A unique  eigenvalue $e_\lm$ that converges to zero as $\lm\to 0$ is analytic at $\lm=0$ and obeys
\begin{equation*}
  e_\lm=-\lm^2\left(\frac{1}{h_+^2+h_-^2}\int_{\Real}Vh^2\,dx\right)^2
  +O(\lm^3)\quad\text{as }\lm\to 0,
\end{equation*}
where $h_\pm=\lim\limits_{x\to\pm\infty}h(x)$.
If $\int_\Real Vh^2\,dx>0$,
$\lm=0$ is not a coupling constant threshold. If $\int_\Real Vh^2\,dx=0$ and $\supp V$ lies between two consecutive zeros of $h$, then there exists a bound state near zero for all small enough $\lm$  (positive and negative). The precise definitions of the coupling constant threshold, zero-energy resonances and half-bound states  will be given in the next section.
Klaus  seems to have been the first to notice the close connection between zero-energy resonances of the unperturbed operator  and the threshold
phenomenon for the operator subject to certain short-range perturbations.
For a treatment of the threshold phenomena for Hamiltonians with periodic potentials perturbed by short range ones  we refer the reader to
\cite{ Klaus:1982, GesztesySimon1993, FassariKlaus1998}.

For the spectral theory, the threshold behaviour of eigenvalues and resonances of Schrodinger operators as they are absorbed by the continuous spectrum is inte\-re\-sting in itself. However the study of this phenomenon is relevant to the stability of solutions for the Korteweg-de~Vries equation \cite{ScharfWreszinski:1981} as well as to the existence of 'breathers' -- nonlinear waves in which energy concentrates in a localized and oscillatory fashion --  for discrete nonlinear Schr\"{o}dinger systems \cite{Weinstein:1999, KevrekidisRasmussenBishop2001}.

The aim of this paper is a detailed analysis of the threshold behaviour of negative eigenvalues for 1D Schrodinger operators having the form
\begin{equation*}
H_\lm=-\frac{d^2}{dx^2}+U(x)+ \lm\alpha_\lm V(\alpha_\lm x),
\end{equation*}
$\dmn H_\lm=W_2^2(\Real)$.
Here $U$ and $V$ are $L^\infty(\Real)$-functions of compact support and $\lm$ is a positive parameter. We assume that positive sequence $\{\alpha_\lm\}_{\lm>0}$ has a finite or infinite limit as $\lm\to 0$.
The perturbation $\lm\alpha_\lm V(\alpha_\lm \cdot)$ of operator $H_0=-\frac{d^2}{dx^2}+U$ combines the scaling of potential $V$ in two directions. Regarding the product $\lm\alpha_\lm$ as a coupling constant, we see that this constant can both be infinitely small and infinitely large as $\lm\to 0$; the support of the perturbation can shrink to a point in the limit if $\alpha_\lm\to+\infty$,  extend to the whole line if $\alpha_\lm\to 0$, or  remain actually a fixed size if $\alpha_\lm$ tends to a positive number.
The spectrum  of $H_\lm$ consists of at most a finite number of negative eigenvalues and the essential spectrum $[0, \infty)$. Under certain conditions on the potentials $U$ and $V$  the operator $H_\lm$  has a unique bound state $e_\lm$ that is absorbed at the bottom of the essential spectrum as $\lm$ goes to zero (as $\lm$ is continued in the opposite direction the bound state emerges from the essential spectrum). The threshold bound state $e_\lm$ may or may not be the ground state.
Our method is different from that of Simon and Klaus; we don't use the Birman-Schwinger principle. We obtain a part of our results from the  norm resolvent convergence of Schr\"{o}dinger operators with $(a \delta'+b \delta)$-like potentials \cite{GolovatyHrynivJPA:2010, Golovaty:2012, GolovatyHrynivProcEdinburgh2013, GolovatyIEOT2013};  to prove the rest ones we use the asymptotic method of quasimodes  or in other words of ``almost" eigen\-va\-lues and eigenfunctions.

\section{Main Results}
In this section we give the main results of this paper. We start with some definitions.

\begin{defn}
   Let $A$ and $B_\lm$ be self-adjoint operators and $B_\lm$ be relatively $A$-com\-pact for all $\lm>0$ (then $\sigma_{ess}(A+B_\lm)=\sigma_{ess}(A)$).  Suppose that interval $(a,b)$ is a gap in the spectrum of $A$ and $b\in\sigma_{ess}(A)$. If  we can find an eigenvalue $e_\lm$ of $A+B_\lm$ in gap $(a,b)$ for all $\lm>0$ with the property that  $e_\lm\to b-0$  as  $\lm\to 0$, then we call $\lm=0$ the \textit{coupling constant threshold}.
\end{defn}

There are different definitions of the zero-energy resonances for Schr\"{o}dinger operators, but in  one dimension it is convenient for us to use the following one.

\begin{defn}
We say  operator~$H_0=-\frac{d^2}{dx^2}+U$  possesses  a \emph{zero-energy resonance} if there exists a non trivial solution~$h$ of the equation $ -h'' + Uh= 0$
that is bounded on the whole line. We then call $h$ the \emph{half-bound state}.
\end{defn}

Set $h_\pm=\lim\limits_{x\to\pm\infty}h(x)$;
the limits exist, because the half-bound state is constant outside $\supp U$ as a bounded solution of equation $h''=0$. Also, both the values $h_\pm$ are different from zero, and so $h\not\in L_2(\Real)$. Since a half-bound state is defined up to a scalar multiplier, we renormalize it by defining
$u=h/h_-$. We call $u$ the \textit{normalized half-bound state}.
Let $\theta$ hereafter denote the limit of the  normalized half-bound state as $x\to+\infty$, i.e., $\theta:=\lim_{x\to+\infty}u(x)$,
provided $\lim_{x\to-\infty}u(x)=1$.

\begin{thm}\label{TheoremOrder2}
Suppose that $H_0=-\frac{d^2}{dx^2}+U$ has a zero-energy resonance with normalized half-bound state $u$.  If one of the following conditions is fulfilled
  \begin{itemize}
\item[\textit{(i)}]
    $\alpha_\lm \to \alpha $ for some positive $\alpha$ and $\int_\Real V(\alpha \,\cdot)u^2\,dx<0$;
\item[\textit{(ii)}]
	$\alpha_\lm \to +\infty$,  $u(0)\neq 0$  and $\int_\Real V\,dx<0$;
\item[\textit{(iii)}]
	$\alpha_\lm \to 0$ and $\int_{\Real_-} V\,dx+\theta^2\int_{\Real_+} V\,dx<0$,
  \end{itemize}
then  $H_\lm=H_0+ \lm\alpha_\lm V(\alpha_\lm \cdot)$ possesses the coupling constant threshold $\lm=0$, i.e., for all small positive $\lm$ there exists a negative eigenvalue $e_\lm$ of $H_\lm$ such that $e_\lm\to 0$ as $\lm\to0$.
In addition, $e_\lm=-\lm^2(k^2+o(1))$ as $\lm\to0$,
where
\begin{align*}
  	&k=\frac{\alpha}{\theta^2+1}\int_\Real V(\alpha \,\cdot)u^2\,dx 		\quad  \text{if \ } \alpha_\lm \to \alpha \text{ and } \alpha>0;
  \\
	&k=\frac{u^2(0)}{\theta^2+1}\int_\Real V\,dx \quad \text{if \ } \alpha_\lm \to +\infty;
    \\
    &k=\frac{1}{\theta^2+1}\int_{\Real_-}\kern-5pt V\,dx 	+\frac{\theta^2}{\theta^2+1}
    \int_{\Real_+} \kern-5pt V\,dx \quad \text{if \ } \alpha_\lm \to 0.
\end{align*}
\end{thm}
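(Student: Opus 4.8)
The plan is to reduce each of the three cases to the construction of a quasimode for $H_\lm$ at an energy $-\kappa_\lm^2<0$, the admissible rate $\kappa_\lm>0$ being forced by a matching condition that the zero--energy resonance of $H_0$ renders solvable, to leading order, only for one value of $\kappa$. Fix the second solution $v$ of $-v''+Uv=0$ normalized by $v(x)=x$ to the left of $\supp U$, so that $\{u,v\}$ is a fundamental system with $uv'-u'v\equiv1$; from $v''=0$ and the constancy of $u$ outside $\supp U$ one gets $v(x)=\theta^{-1}x+\delta$ to the right of $\supp U$ for a suitable constant $\delta$. Equivalently, in terms of the Jost solutions $f_\pm(\cdot,\kappa)$ of $-f''+Uf=-\kappa^2f$ --- entire in $\kappa$ because $U$ has compact support, with $f_-(\cdot,0)=u$, $f_+(\cdot,0)=\theta^{-1}u$, $\partial_\kappa f_-(\cdot,0)=v$ --- the Wronskian $W(\kappa)=W(f_-,f_+)$ obeys $W(0)=0$ and $W'(0)=-(\theta^2+1)/\theta\neq0$, so the resolvent kernel of $H_0+\kappa^2$ has the resonant expansion $G_\kappa(x,y)=\dfrac{u(x)u(y)}{(\theta^2+1)\kappa}+O(1)$ as $\kappa\to0^+$. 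This rank--one singular term is what will balance the perturbation.

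Given small $\kappa>0$, I would seek a quasimode $\Psi_\lm$ which on a large interval $[-L_\lm,L_\lm]\supset\supp U\cup\supp V(\alpha_\lm\cdot)$ has the form $au+bv+w_\lm$ and is glued $C^1$ to exterior pieces proportional to $e^{\kappa x}$ for $x<-L_\lm$ and to $e^{-\kappa x}$ for $x>L_\lm$; here $w_\lm$ is a corrector chosen with $-w_\lm''+Uw_\lm\approx-\lm\alpha_\lm V(\alpha_\lm\cdot)(au+bv)$ so that $w_\lm=O(\lm)$ and the residual of $(H_\lm-e_\lm)\Psi_\lm$ reduces to terms of the form $\lm\alpha_\lm V(\alpha_\lm\cdot)w_\lm+\kappa^2\Psi_\lm$ (to be shrunk further by iterating the corrector when $\alpha_\lm$ is large). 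The $C^1$--matching at the two endpoints is a homogeneous linear system of four equations in the four unknowns $a,b$ and the two tail coefficients; nontriviality forces one scalar compatibility condition on $\kappa$, and expanding it to leading order --- using $u(\pm\infty)\in\{1,\theta\}$, the asymptotics of $v$, and $\int_\Real V(\alpha_\lm\cdot)(au+bv)u\,dx=a\int_\Real V(\alpha_\lm\cdot)u^2\,dx+O(\lm)$ --- yields
\[
\kappa=\kappa_\lm:=-\frac{\lm\alpha_\lm}{\theta^2+1}\int_\Real V(\alpha_\lm x)\,u(x)^2\,dx\cdot\bigl(1+o(1)\bigr),
\]
which is positive by the sign hypotheses of the theorem. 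Estimating $\norm{(H_\lm-e_\lm)\Psi_\lm}$ against $\norm{\Psi_\lm}\gtrsim\kappa_\lm^{-1/2}$ (the tails carry most of the mass) gives a quotient that is $o(\kappa_\lm^2)$; since $\sigma_{\mathrm{ess}}(H_\lm)=[0,\infty)$, the spectral theorem then supplies an eigenvalue $e_\lm$ with $e_\lm=-\kappa_\lm^2+o(\kappa_\lm^2)$, and a complementary argument shows that near $0$ there is no other eigenvalue, so $e_\lm$ is unambiguous.

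The three formulas for $k$ are simply the three limits of $\lm^{-1}\kappa_\lm$. By the substitution $z=\alpha_\lm x$, $\alpha_\lm\int_\Real V(\alpha_\lm x)u(x)^2\,dx=\int_\Real V(z)\,u(z/\alpha_\lm)^2\,dz$, which tends to $\alpha\int_\Real V(\alpha x)u^2\,dx$ when $\alpha_\lm\to\alpha$, to $u(0)^2\int_\Real V\,dz$ when $\alpha_\lm\to+\infty$ (here $u(0)\neq0$ is used), and to $\int_{\Real_-}V\,dz+\theta^2\int_{\Real_+}V\,dz$ when $\alpha_\lm\to0$ (where only the limiting values $u(-\infty)=1$, $u(+\infty)=\theta$ survive); hence $e_\lm=-\lm^2(k^2+o(1))$ with $k$ as stated. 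I expect the main obstacle to be case~(iii): there $\supp V(\alpha_\lm\cdot)$ expands like $\alpha_\lm^{-1}$, so the corrector $w_\lm$, the resolvent bound, and the matching estimates must be uniform over a growing interval, which forces one to carry exponential weights in the expansion of $G_\kappa$ and to control the ratio $\kappa_\lm/\alpha_\lm$ through its several regimes; it may be cleaner in that case to pass to the variable $y=\alpha_\lm x$ and invoke the norm resolvent convergence of $H_\lm$, after rescaling, to a Hamiltonian with a point interaction at the origin encoding the resonance of $H_0$. Cases (i) and (ii) are softer --- (i) being a $\lm$--dependent version of Klaus's setting \cite{Klaus:1982} and (ii) concentrating the perturbation at the single point $x=0$, where only $u(0)$ survives in the limit.
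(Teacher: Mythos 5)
Your construction is, in substance, the quasimode machinery that the paper deploys for Theorems \ref{TheoremAlphaPositive}--\ref{TheoremAlphaGoesToZero}, and your leading-order bookkeeping is correct: the $C^1$-matching does force $\kappa_\lm=-\tfrac{\lm\alpha_\lm}{\theta^2+1}\int_\Real V(\alpha_\lm x)u^2\,dx\,(1+o(1))$, and the three limits of $\lm^{-1}\kappa_\lm$ are exactly the stated values of $k$. The paper, however, proves Theorem~\ref{TheoremOrder2} by a different and softer route: the rescaling $x\mapsto x/\lm$ turns $H_\lm$ into $\lm^2S_\lm$ with $S_\lm=-\tfrac{d^2}{dt^2}+\lm^{-2}U(\lm^{-1}t)+\nu_\lm^{-1}V(\nu_\lm^{-1}t)$, $\nu_\lm=\lm/\alpha_\lm$, and one then invokes the known norm resolvent convergence of $S_\lm$ to the point-interaction operator $\cH(\theta,\beta)$ of \eqref{PointInteraction} (Theorems 3.1, 4.1, 5.1 of \cite{GolovatyIEOT2013}), whose unique negative eigenvalue $-\big(\theta\beta/(\theta^2+1)\big)^2$ is computed in Lemma~\ref{LemCHeiv}. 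You mention this only as an optional fallback for case \textit{(iii)}.

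The genuine gap in your route is the error control in cases \textit{(ii)} and \textit{(iii)}, where Theorem~\ref{TheoremOrder2} imposes no rate on $\alpha_\lm$. With one corrector the residual on the inner interval contains $\lm^2V_\lm v_\lm$ with $V_\lm=\alpha_\lm V(\alpha_\lm\cdot)$; since $\norm{V_\lm}_{L^2}=\alpha_\lm^{1/2}\norm{V}_{L^2}$ while $v_\lm=O(1)$ there, and $\norm{\Psi_\lm}^{-1}=O(\kappa_\lm^{1/2})=O(\lm^{1/2})$, the quasimode accuracy is $O(\lm^{5/2}\alpha_\lm^{1/2})$ against the required $o(\kappa_\lm^2)=o(\lm^2)$; this needs $\alpha_\lm=o(\lm^{-1})$, and $n$ iterations of the corrector only relax it to $\alpha_\lm=o(\lm^{-(2n-1)})$, so no finite number of correctors covers, say, $\alpha_\lm=e^{1/\lm}$. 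Symmetrically, in case \textit{(iii)} the corrector grows like $\alpha_\lm^{-2}$ on the expanding interval $(-b/\alpha_\lm,b/\alpha_\lm)$ (cf.\ Proposition~\ref{PropVlmLast}) and the analogous estimate requires $\lm^{1/2}\alpha_\lm^{-2}\to0$. This is precisely why Theorems \ref{TheoremAlphaGoesToInfty} and \ref{TheoremAlphaGoesToZero} carry the hypotheses $\alpha_\lm=o(\lm^{-1/3})$ and $\lm^{1/4}\alpha_\lm^{-1}\to0$ --- restrictions the paper itself describes as artifacts of the approximation method --- and why Theorem~\ref{TheoremOrder2} is proved by resolvent convergence instead. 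To close your argument at arbitrary rates you would have to either solve the full perturbed equation exactly on the inner interval (a shooting/Wronskian argument rather than a perturbative corrector, so that the only residual comes from the gluing) or adopt the rescaling-plus-norm-resolvent-convergence argument in all three cases, not just \textit{(iii)}. The uniqueness claim at the end of your second paragraph is unsubstantiated but also unnecessary: the theorem asserts only existence of a threshold eigenvalue.
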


We will point out two consequences of the theorem.
Note that the trivial potential $U$ has a zero-energy resonance with half-bound state $u=1$. Then $\theta=1$,  $u(0)=1$ and so all conditions \textit{(i)--(iii)} become $\int_\Real V\,dx<0$ as well as all formulas for $k$ convert into $k=\frac12\int_\Real V\,dx$. We have proved:
\begin{cor}
Assume $U=0$. If sequence $\alpha_\lm$ has a finite or infinite limit as $\lm\to 0$ and $\int_\Real V\,dt<0$, then operator $-\frac{d^2}{dx^2}+ \lm\alpha_\lm V(\alpha_\lm \cdot)$ possesses the coupling constant threshold $\lm=0$ with the  eigenvalue
\begin{equation*}
  e_\lm=-\frac{\lm^2}{4}\left(\int_\Real V\,dx\right)^2+o(\lm^2)
  \quad \text{as } \lm\to 0.
\end{equation*}
\end{cor}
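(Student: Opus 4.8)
The plan is to obtain the corollary as a direct specialization of Theorem~\ref{TheoremOrder2} to the case $U=0$, so the whole task reduces to checking that the single hypothesis $\int_\Real V\,dx<0$ triggers the appropriate sign condition in whichever regime of $\alpha_\lm$ occurs, and that the constant $k$ takes the same value $\frac12\int_\Real V\,dx$ in all of them. First I would record that $H_0=-\frac{d^2}{dx^2}$ possesses a zero-energy resonance: the equation $-h''=0$ admits the bounded, non-trivial solution $h\equiv1$, which is constant on the whole line, so $h_\pm=1$. Normalizing via $u=h/h_-$ gives the normalized half-bound state $u\equiv1$, and therefore $\theta=\lim_{x\to+\infty}u(x)=1$ together with $u(0)=1$. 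These are exactly the values asserted in the paragraph preceding the corollary.

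Next I would note that since $\alpha_\lm>0$, its finite or infinite limit lies in $\{0\}\cup(0,+\infty)\cup\{+\infty\}$, so precisely one of the three cases of Theorem~\ref{TheoremOrder2} applies, and I would verify each in turn. If $\alpha_\lm\to\alpha>0$, then with $u\equiv1$ the substitution $y=\alpha x$ yields $\int_\Real V(\alpha\,\cdot)u^2\,dx=\frac{1}{\alpha}\int_\Real V\,dy$, which is negative exactly when $\int_\Real V\,dy<0$, so condition \textit{(i)} holds; moreover $k=\frac{\alpha}{\theta^2+1}\cdot\frac{1}{\alpha}\int_\Real V\,dy=\frac12\int_\Real V\,dx$. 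If $\alpha_\lm\to+\infty$, then $u(0)=1\neq0$ and condition \textit{(ii)} is literally $\int_\Real V\,dx<0$, with $k=\frac{u^2(0)}{\theta^2+1}\int_\Real V\,dx=\frac12\int_\Real V\,dx$. Finally, if $\alpha_\lm\to0$, then since $\theta=1$ condition \textit{(iii)} reads $\int_{\Real_-}V\,dx+\int_{\Real_+}V\,dx=\int_\Real V\,dx<0$, and $k=\frac{1}{\theta^2+1}\int_{\Real_-}V\,dx+\frac{\theta^2}{\theta^2+1}\int_{\Real_+}V\,dx=\frac12\int_\Real V\,dx$ as well.

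In every regime the hypotheses of Theorem~\ref{TheoremOrder2} are therefore met under the single assumption $\int_\Real V\,dx<0$, and the coupling constant $k$ equals $\frac12\int_\Real V\,dx$. Invoking the conclusion $e_\lm=-\lm^2(k^2+o(1))$ and squaring, I obtain $e_\lm=-\frac{\lm^2}{4}\left(\int_\Real V\,dx\right)^2+o(\lm^2)$ as $\lm\to0$, which is the claimed formula. I expect no genuine obstacle in this argument; the only points deserving a line of care are confirming that the constant function is an admissible half-bound state in the sense of the resonance definition (so that $h_\pm\neq0$ and the normalization $u=h/h_-$ makes sense), and the scaling identity $\int_\Real V(\alpha\,\cdot)\,dx=\frac{1}{\alpha}\int_\Real V\,dy$ used to collapse case \textit{(i)} onto the common hypothesis.
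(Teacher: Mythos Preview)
Your proposal is correct and follows exactly the same approach as the paper: specialize Theorem~\ref{TheoremOrder2} to $U=0$, observe that the normalized half-bound state is $u\equiv1$ with $\theta=1$ and $u(0)=1$, and then check that each of conditions \textit{(i)--(iii)} collapses to $\int_\Real V\,dx<0$ with the common value $k=\tfrac12\int_\Real V\,dx$. Your write-up is in fact slightly more explicit than the paper's one-line justification, particularly in carrying out the change of variables in case \textit{(i)}.
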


It is worth mentioning here that Theorem XIII.110 in \cite[p.338]{ReedSimonIV} states in particular that in the case $V\in C^\infty_0(\Real)$ the operator
$-\frac{d^2}{dx^2}+ \lm V(x)$ has a negative eigenvalue for all positive $\lm$ if   $\int_{\Real}V\,dx<0$. Moreover, in that case this eigenvalue is analytic in $\lm$ at $\lm=0$.
The proof of this assertion is based on  the analyticity of some determinants. Theorem \ref{TheoremOrder2} and the corollary deal with the non-analytic family of operators  and gives us  an example of the non-analytic threshold behaviour.

The theorem also contains the cases when the coupling constant $\lm \alpha_\lm$ is infinitely large. In this instance one could consider that the large coupling constant is neutralized by the rapid  localization of the short range potential.
\begin{cor}
   If $H_0$ has a zero-energy resonance with normalized half-bound
   state~$u$, $u(0)\neq 0$ and $\int_\Real V\,dx<0$, then the operators
   \begin{equation*}
     -\frac{d^2}{dx^2}+U(x)+ \frac{1}{\lm^{\kappa-1}}\,V\left(\frac{x}{\lm^\kappa}\right),\quad \kappa\geq 1,
   \end{equation*}
possess the coupling constant threshold $\lm=0$ with the eigenvalue $e_\lm$ obeying
\begin{equation*}
  e_\lm=-\lm^2\left(\frac{u^2(0)}{\theta^2+1}\int_\Real V\,dx\right)^2+o(\lm^2)
  \quad \text{as } \lm\to 0.
\end{equation*}
In particular,  this is true for the operator $-\frac{d^2}{dx^2}+U(x)+ V\left(\frac{x}{\lm}\right)$.
\end{cor}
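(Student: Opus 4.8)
The plan is to deduce the corollary directly from Theorem~\ref{TheoremOrder2}, case~\textit{(ii)}, after recasting the operator family into the standard form $H_\lm=H_0+\lm\alpha_\lm V(\alpha_\lm\cdot)$ treated there. First I would choose $\alpha_\lm=\lm^{-\kappa}$. Then $\lm\alpha_\lm=\lm^{1-\kappa}=\lm^{-(\kappa-1)}$ and $V(\alpha_\lm x)=V(x/\lm^\kappa)$, so that
\[
\lm\alpha_\lm V(\alpha_\lm x)=\frac{1}{\lm^{\kappa-1}}\,V\!\left(\frac{x}{\lm^\kappa}\right),
\]
and the operator appearing in the corollary is precisely $H_\lm$ for this particular sequence $\{\alpha_\lm\}_{\lm>0}$.

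Next I would verify that the hypotheses of case~\textit{(ii)} are met. Since $\kappa\ge 1$, the sequence $\alpha_\lm=\lm^{-\kappa}$ satisfies $\alpha_\lm\to+\infty$ as $\lm\to 0$; the remaining two conditions, $u(0)\neq 0$ and $\int_\Real V\,dx<0$, are exactly the standing assumptions of the corollary. Theorem~\ref{TheoremOrder2} therefore applies and gives that $\lm=0$ is a coupling constant threshold with $e_\lm=-\lm^2(k^2+o(1))$, where $k=\frac{u^2(0)}{\theta^2+1}\int_\Real V\,dx$. Since $-\lm^2(k^2+o(1))=-\lm^2 k^2+o(\lm^2)$, this is the asserted asymptotics
\[
e_\lm=-\lm^2\left(\frac{u^2(0)}{\theta^2+1}\int_\Real V\,dx\right)^2+o(\lm^2)\quad\text{as }\lm\to 0.
\]
The closing ``in particular'' claim is simply the case $\kappa=1$, for which $\lm^{\kappa-1}=1$ and $x/\lm^\kappa=x/\lm$.

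There is essentially no obstacle here: all of the analytic content is carried by Theorem~\ref{TheoremOrder2}, and the corollary is a bookkeeping step. The only point deserving a moment's attention is that the constraint $\kappa\ge 1$ is exactly what forces $\alpha_\lm=\lm^{-\kappa}\to+\infty$, so that we land in regime~\textit{(ii)} of the theorem (rather than \textit{(i)} or \textit{(iii)}) and pick up the squared constant $\bigl(u^2(0)/(\theta^2+1)\bigr)^2\bigl(\int_\Real V\,dx\bigr)^2$; this matches the heuristic, stated just before the corollary, that the large (or absent) coupling constant $\lm\alpha_\lm=\lm^{1-\kappa}$ is neutralized by the rapid localization of the short-range potential.
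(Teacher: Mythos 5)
Your proposal is correct and is exactly the argument the paper intends: the corollary is obtained by setting $\alpha_\lm=\lm^{-\kappa}$, observing that $\lm\alpha_\lm V(\alpha_\lm x)=\lm^{1-\kappa}V(x/\lm^{\kappa})$, and invoking case \textit{(ii)} of Theorem~\ref{TheoremOrder2}. One tiny quibble: $\alpha_\lm=\lm^{-\kappa}\to+\infty$ already for every $\kappa>0$, so $\kappa\ge 1$ is not what ``forces'' regime \textit{(ii)}; rather it is what makes the prefactor $\lm^{1-\kappa}$ non-vanishing (large or constant), which is the point of the remark preceding the corollary.
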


Theorem~\ref{TheoremOrder2} describes  the absorption of an eigenvalue by the continuous spectrum with the rate $c\lm^2$.
However the absorption is also possible at  higher rates, when  the inequalities in  conditions \textit{(i)--(iii)} turn into equalities.

\begin{thm}\label{TheoremAlphaPositive}
Assume $V\in W_2^1(\Real)$ and $\alpha_\lm \to\alpha$ as $\lm\to 0$ for
some $\alpha>0$. Suppose also that $H_0$ has a zero-energy resonance and $u$ is a normalized half-bound state. If
\begin{equation}\label{cndxV}
   \int_\Real V(\alpha \,\cdot)\,u^2\,dx=0, \qquad
   \int_\Real xV'(\alpha \,\cdot)\,u^2\,dx<0,
\end{equation}
and $\lm^{-1/3}(\alpha_\lm-\alpha)\to \infty$ as $\lm\to 0$, then $H_\lm=H_0+ \lm\alpha_\lm V\big(\alpha_\lm x\big)$ has the coupling constant threshold $\lm=0$,   and  the corresponding eigenvalue admits asymptotics
\begin{equation*}
e_\lm=-\lm^2(\alpha_\lm-\alpha)^2 \left(\left(\frac{\alpha}{\theta^2+1}\int_\Real xV'(\alpha \,\cdot)\,u^2\,dx\right)^2+o(1)\right)\quad\text{as }\lm\to 0.
\end{equation*}
\end{thm}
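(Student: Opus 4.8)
The plan is to build, for each small $\lm$, an explicit quasimode $\psi_\lm\in W_2^2(\Real)$ at a trial energy $-\kappa_\lm^2$ with $\kappa_\lm>0$, $\kappa_\lm\to0$, and then invoke the spectral theorem in the standard quasimode form: if $\norm{(H_\lm+\kappa_\lm^2)\psi_\lm}\le\delta_\lm\norm{\psi_\lm}$, then $\sigma(H_\lm)$ meets $[-\kappa_\lm^2-\delta_\lm,-\kappa_\lm^2+\delta_\lm]$, and since for small $\lm$ this interval lies in $(-\infty,0)$, where $\sigma(H_\lm)$ is purely discrete, $H_\lm$ has an eigenvalue $e_\lm=-\kappa_\lm^2+O(\delta_\lm)$. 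Everything then reduces to choosing $\kappa_\lm$ so that $\delta_\lm=o(\kappa_\lm^2)$ and computing $\kappa_\lm$.

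First I would fix the right scale. Put $\beta_\lm=\alpha_\lm-\alpha$; the assumption $\lm^{-1/3}(\alpha_\lm-\alpha)\to\infty$ forces $\beta_\lm>0$ for small $\lm$ and $\beta_\lm\gg\lm^{1/3}$. Since $V\in W_2^1(\Real)$, the map $a\mapsto\int_\Real aV(a\,\cdot)u^2\,dx$ is $C^1$ near $a=\alpha$; by the first relation in \eqref{cndxV} it vanishes at $\alpha$, and its derivative at $\alpha$ equals $\alpha\int_\Real xV'(\alpha\,\cdot)u^2\,dx$, whence
\begin{equation*}
  \int_\Real\alpha_\lm V(\alpha_\lm\,\cdot)\,u^2\,dx
   =\alpha\beta_\lm\int_\Real xV'(\alpha\,\cdot)\,u^2\,dx+o(\beta_\lm),
\end{equation*}
which by the second relation in \eqref{cndxV} is strictly negative for small $\lm$. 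I then set $\kappa_\lm=-\frac{\lm}{\theta^2+1}\int_\Real\alpha_\lm V(\alpha_\lm\,\cdot)u^2\,dx>0$, so that $\kappa_\lm\asymp\lm\beta_\lm$ and
\begin{equation*}
  -\kappa_\lm^2=-\lm^2\beta_\lm^2\left(\left(\frac{\alpha}{\theta^2+1}\int_\Real xV'(\alpha\,\cdot)u^2\,dx\right)^2+o(1)\right)
\end{equation*}
is already the asserted leading term. The identity $\kappa_\lm(\theta^2+1)=-\lm\int_\Real\alpha_\lm V(\alpha_\lm\,\cdot)u^2\,dx$ is the usual zero-energy resonance matching relation; the novelty here is that, because the genuine first-order coefficient $\int_\Real\alpha V(\alpha\,\cdot)u^2\,dx$ vanishes by \eqref{cndxV}, its leading part is of order $\lm\beta_\lm$ instead of $\lm$, which shifts the rate of absorption from $\lm^2$ (as in Theorem~\ref{TheoremOrder2}) to $\lm^2\beta_\lm^2$.

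Next I would construct $\psi_\lm$. Pick $R$ so that $(-R,R)$ contains $\supp U$ and $\supp\bigl(\alpha_\lm V(\alpha_\lm\,\cdot)\bigr)$ for all small $\lm$. On $(-R,R)$ put $\psi_\lm=u+\lm\phi_\lm+\kappa_\lm v$, where $v$ solves $-v''+Uv=0$ with $v(-R)=0$, $v'(-R)=1$ (the ``growing'' companion of $u$, so $W[u,v]=1$), and $\phi_\lm$ solves $-\phi_\lm''+U\phi_\lm=-\alpha_\lm V(\alpha_\lm\,\cdot)u$ with $\phi_\lm(-R)=\phi_\lm'(-R)=0$; this makes $\psi_\lm$ join in a $C^1$ way to $e^{\kappa_\lm(x+R)}$ on $(-\infty,-R]$. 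On $(R,L)$ put $\psi_\lm=\psi_\lm(R)\,e^{-\kappa_\lm(x-R)}$, on $(L,2L)$ multiply by a fixed smooth cut-off that kills it, with $L-R=2\kappa_\lm^{-1}\ln(1/\lm)$, and mollify the derivative jump at $x=R$ over a fixed short interval so that $\psi_\lm\in W_2^2(\Real)$. The residual $r_\lm=(H_\lm+\kappa_\lm^2)\psi_\lm$ I would estimate piece by piece: on $(-R,R)$ the inhomogeneous equation for $\phi_\lm$ cancels the dangerous $O(\lm)$ term, leaving $r_\lm=\lm^2\,\alpha_\lm V(\alpha_\lm\,\cdot)\phi_\lm+O(\lm\kappa_\lm+\kappa_\lm^2)=O(\lm^2)$ in $L^2$; on the exponential pieces $-\psi_\lm''=-\kappa_\lm^2\psi_\lm$ exactly, so only the mollified kink at $x=R$ contributes, and its size is $O(\lm\kappa_\lm+\kappa_\lm^2)=O(\lm^2)$ precisely because $\kappa_\lm$ satisfies the matching relation; on $(L,2L)$ the residual is bounded by $e^{-\kappa_\lm(L-R)}$ times a polynomial in $L$, hence $O(\lm^2)$. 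Thus $\norm{r_\lm}_{L^2}=O(\lm^2)$. On the other hand the long tail on $(R,L)$ gives $\norm{\psi_\lm}^2=\frac{\theta^2+1}{2\kappa_\lm}\bigl(1+o(1)\bigr)$, so $\norm{\psi_\lm}\asymp\kappa_\lm^{-1/2}$ and $\delta_\lm=\norm{r_\lm}/\norm{\psi_\lm}=O\bigl(\lm^2\kappa_\lm^{1/2}\bigr)=O\bigl(\lm^{5/2}\beta_\lm^{1/2}\bigr)$.

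Finally, $\delta_\lm=o(\kappa_\lm^2)$ is equivalent to $\lm^{5/2}\beta_\lm^{1/2}=o(\lm^2\beta_\lm^2)$, i.e. to $\lm^{1/3}=o(\beta_\lm)$ --- exactly the hypothesis $\lm^{-1/3}(\alpha_\lm-\alpha)\to\infty$. With it, $H_\lm$ has an eigenvalue $e_\lm=-\kappa_\lm^2+o(\kappa_\lm^2)\to0-$, which is the coupling constant threshold, and substituting the expansion of $\kappa_\lm$ from the second step gives the stated asymptotics; uniqueness of the eigenvalue near $0$ follows as in Theorem~\ref{TheoremOrder2}, the resonance of $H_0$ being simple. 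I expect the main obstacle to be the residual bound $\norm{r_\lm}=O(\lm^2)$: it is what forces the corrector $\lm\phi_\lm$ into the ansatz (the naive choice $\psi_\lm=u$ on the core yields only $\norm{r_\lm}=O(\lm)$, hence $\delta_\lm=O(\lm^{3/2}\beta_\lm^{1/2})$, which is never $o(\kappa_\lm^2)$), and one must check that the remote cut-off does not spoil the $\kappa_\lm^{-1/2}$ size of $\norm{\psi_\lm}$. (One could instead bypass quasimodes and read $e_\lm$ off the exact matching equation $\kappa\,\phi_-(R)+\phi_-'(R)=0$ for the left solution $\phi_-$, solving for its small root by the implicit function theorem; that route needs only $\lm^{-1}(\alpha_\lm-\alpha)\to\infty$.)
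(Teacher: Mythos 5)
Your proposal is correct and takes essentially the same route as the paper's proof: a quasimode $(-\kappa_\lm^2,\phi_\lm)$ with exponential tails glued to a corrected half-bound state on the core, the matching relation $\kappa_\lm(\theta^2+1)=-\lm\int_\Real \alpha_\lm V(\alpha_\lm\,\cdot)u^2\,dx$, the norm balance $\norm{r_\lm}/\norm{\psi_\lm}=O(\lm^{5/2}\eps_\lm^{1/2})$ versus $\kappa_\lm^2\asymp\lm^2\eps_\lm^2$, and hence the same role for the hypothesis $\lm^{-1/3}(\alpha_\lm-\alpha)\to\infty$. Your core ansatz $u+\lm\phi_\lm+\kappa_\lm v$ is just a re-decomposition of the paper's $u+\lm v+\lm\eps_\lm w_\lm$, your Taylor/$C^1$ expansion of $a\mapsto\int_\Real aV(a\,\cdot)u^2\,dx$ replaces the paper's $L^2$ difference-quotient argument for $k_\lm$, and the far cut-off you add is unnecessary but harmless.
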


The next theorem describes the case of  threshold behaviour, when the support of perturbation $\lm\alpha_\lm V(\alpha_\lm \cdot)$ is small and shrinks to a point in the limit.

\begin{thm}\label{TheoremAlphaGoesToInfty}
Assume  $\alpha_\lm \to +\infty$ and operator $H_0$ possesses a zero-energy resonance with normalized half-bound state $u$. If
   \begin{equation}\label{CnduuxV}
   \int_\Real V\,dx=0, \qquad
   u(0)u'(0)\int_\Real xV\,dx<0,
\end{equation}
and $\alpha_\lm=o(\lm^{-1/3})$ as $\lm\to 0$, then $H_\lm$ has the coupling constant threshold $\lm=0$;   the infinitely small eigenvalue obeys
\begin{equation*}
  e_\lm=-\frac{\lm^2}{\alpha_\lm^2}
  \left(\left(\frac{2 u(0)u'(0)}{\theta^2+1}\int_\Real xV(x)\,dx\right)^2+o(1)\right)\quad\text{as }\lm\to 0.
\end{equation*}
\end{thm}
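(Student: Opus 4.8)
The plan is to construct a quasimode for $H_\lm$ at energy $e_\lm=-\lm^2k_\lm^{-2}\alpha_\lm^{-2}$... wait, more precisely at energy of order $\lm^2\alpha_\lm^{-2}$, and then invoke the spectral-theoretic lemma (to be established in the body of the paper) which says that a sufficiently good approximate eigenpair forces a genuine eigenvalue nearby. Since $\alpha_\lm\to+\infty$ the perturbation $\lm\alpha_\lm V(\alpha_\lm x)$ concentrates at the origin, and under the first condition in \eqref{CnduuxV} its zeroth moment vanishes, so to leading order it behaves like a $\delta'$-type interaction rather than a $\delta$-interaction. The natural ansatz therefore is to glue the half-bound state $u$ on each side of the origin with a slowly decaying exponential $e^{\mp\sqrt{-e_\lm}\,|x|}$ in the far field, while near $x=0$ one inserts a boundary-layer corrector built from the inner variable $\xi=\alpha_\lm x$ that absorbs the short-range potential. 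Writing $\beta_\lm=\sqrt{-e_\lm}$, the outer part is $u(x)e^{-\beta_\lm x}$ for $x>0$ and $\theta^{-1}u(x)e^{\beta_\lm x}$ for $x<0$ (after normalizing $u(-\infty)=1$, $u(+\infty)=\theta$), and the matching at the origin has to reconcile the jump in the derivative produced by both the exponential factors and the inner layer.

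The key steps, in order, are: (1) Introduce the inner scale $\xi=\alpha_\lm x$ and seek the corrector $w_\lm(\xi)$ solving, to leading order, $-w''=\lm\alpha_\lm^{-1}V(\xi)(\text{const})$; because $\int_\Real V\,dx=0$, $w_\lm'$ has equal limits $\pm$ at $\pm\infty$ only up to the \emph{first} moment $\int xV\,dx$, which is precisely what drives the effective interaction — this is where the factor $\int_\Real xV(x)\,dx$ and the product $u(0)u'(0)$ enter, since near $0$ the half-bound state looks like $u(0)+u'(0)x$ and the source term $V(\alpha_\lm x)(u(0)+u'(0)x/\alpha_\lm)$ pairs the first moment of $V$ with $u'(0)$. (2) Compute the net derivative jump of the glued outer solution: the exponential factors contribute $-\beta_\lm(u(0^+)+\theta^{-1}u(0^-)/\text{...})$, i.e. a term proportional to $\beta_\lm u^2(0)(\theta^{-2}+1)$ (after accounting for the normalization weights), while the inner layer contributes a term proportional to $\lm\alpha_\lm^{-2}u(0)u'(0)\int xV\,dx$ plus the already-present jump of $u'$ itself (which is zero since $u$ is $C^1$ where $U$ is bounded — actually $u\in W^2_\infty$, so $u'$ is continuous). (3) Set the total jump to zero: this is the solvability/matching condition, and it yields $\beta_\lm=\dfrac{\lm}{\alpha_\lm}\cdot\dfrac{2|u(0)u'(0)|}{\theta^2+1}\bigl|\int_\Real xV\,dx\bigr|(1+o(1))$, whence $e_\lm=-\beta_\lm^2$ gives the stated asymptotics; the sign condition $u(0)u'(0)\int xV\,dx<0$ is exactly what makes $\beta_\lm>0$, i.e. makes the eigenvalue genuinely negative. (4) Estimate the residual $\|(H_\lm-e_\lm)\Psi_\lm\|$ in $L_2$ against $\|\Psi_\lm\|$, using $\alpha_\lm=o(\lm^{-1/3})$ to guarantee that the neglected terms (the $O(\alpha_\lm^{-2})$ Taylor remainder of $u$ near $0$, the $O(\beta_\lm)$ interaction between the layer and the exponential, and the $O(\lm^2\alpha_\lm^{-2})$ second-order potential corrections) are $o(\beta_\lm^2)=o(\lm^2\alpha_\lm^{-2})$ relative to the scale of $e_\lm$; the condition $\alpha_\lm=o(\lm^{-1/3})$ is precisely the balance ensuring $\lm/\alpha_\lm^2\ll (\lm/\alpha_\lm)\cdot$something, i.e. that the layer term dominates its own corrections but the boundary layer is still thin enough. (5) Apply the approximation lemma to conclude existence of a true eigenvalue within $o(\beta_\lm^2)$ of $-\beta_\lm^2$, and uniqueness from the $\delta'$-type convergence results cited in the introduction (\cite{GolovatyHrynivJPA:2010} etc.), since the limiting point interaction has exactly one channel.

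The main obstacle I expect is step (4): controlling the error terms sharply enough. The difficulty is that $e_\lm$ is extremely small — of order $\lm^2\alpha_\lm^{-2}$, which is \emph{smaller} than $\lm^2$ — so the quasimode must be accurate to higher relative order than in Theorem 1, and the competition between the two small parameters $\lm$ and $\alpha_\lm^{-1}$ must be handled with care. In particular the boundary-layer corrector $w_\lm$ is only bounded (not decaying) unless one subtracts its limiting linear behavior, and matching that linear tail to the outer exponential correctly — without generating spurious first-order contributions — is the delicate bookkeeping. The hypothesis $\alpha_\lm=o(\lm^{-1/3})$ is the quantitative expression of where this bookkeeping closes: it is the threshold at which the $\delta'$-interaction picture remains valid and the subleading $\delta$-type and curvature corrections stay negligible.
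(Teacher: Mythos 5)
Your proposal follows essentially the same route as the paper: a quasimode at energy $-\omega_\lm^2$ with $\omega_\lm=\lm\alpha_\lm^{-1}k_\lm$, built from the half-bound state glued to decaying exponential tails plus correctors absorbing the concentrated potential, with the matching (zero-jump) condition yielding $k_\lm=-\tfrac{\alpha_\lm}{\theta^2+1}\int_\Real V_\lm u^2\,dx$, which the vanishing zeroth moment turns into $-\tfrac{2u(0)u'(0)}{\theta^2+1}\int_\Real xV\,dx+o(1)$, the sign hypothesis making $\omega_\lm>0$, the residual controlled under $\alpha_\lm=o(\lm^{-1/3})$, and the quasimode lemma invoked at the end. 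The only cosmetic differences are that the paper realizes the correctors as Cauchy problems on the fixed interval $(-b,b)$ and matches at $x=\pm b$ where $u$ is constant, rather than through an explicit inner variable $\xi=\alpha_\lm x$ with matching at the origin.
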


Let us introduce the function
\begin{equation}\label{Theta}
  \Theta(x)=
  \begin{cases}
    1& \text{for } x<0,\\
    \theta &\text{for } x>0.
  \end{cases}
\end{equation}

\begin{thm}\label{TheoremAlphaGoesToZero}
Assume  that $\alpha_\lm \to 0$, potential $V$ is continuous at the origin, operator $H_0$ has a zero-energy resonance and $u$ is a normalized half-bound state. Suppose also the following conditions hold
   \begin{equation}\label{CndU2Th4}
\int_{\Real_-}\kern-5pt V\,dx+\theta^2\int_{\Real_+}\kern-5pt V\,dx=0, \qquad
   V(0)\int_\Real (u^2-\Theta^2)\,dx<0,
\end{equation}
and $\lm^{1/4}\alpha_\lm^{-1}\to 0$ as $\lm\to 0$.
Then  $H_\lm$ has the coupling constant threshold $\lm=0$ with the eigenvalue $e_\lm$ obeying
\begin{equation*}
  e_\lm=-\lm^2\alpha_\lm^2 \left(\left(\frac{V(0)}{\theta^2+1}\int_\Real (u^2-\Theta^2)\,dx\right)^2+o(1)\right)\quad\text{as }\lm\to 0.
\end{equation*}
\end{thm}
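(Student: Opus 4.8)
The plan is to construct quasimodes (``almost'' eigenfunctions) for $H_\lm$ corresponding to an ``almost'' eigenvalue of the form $-\lm^2\alpha_\lm^2 k^2$ with $k=\frac{V(0)}{\theta^2+1}\int_\Real(u^2-\Theta^2)\,dx$, and then invoke the standard spectral estimate: if $\norm{(H_\lm-\mu_\lm)\psi_\lm}\le\delta_\lm\norm{\psi_\lm}$ then $\dist(\mu_\lm,\sigma(H_\lm))\le\delta_\lm$, so that $\sigma(H_\lm)$ meets a small neighbourhood of $\mu_\lm$; combined with the fact (from the earlier theory, cf.\ Theorem~\ref{TheoremOrder2}) that $\sigma_{ess}(H_\lm)=[0,\infty)$ and that below $0$ there are only finitely many eigenvalues, this forces a genuine negative eigenvalue $e_\lm$ near $-\lm^2\alpha_\lm^2k^2$ for all small $\lm$. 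The starting point for the ansatz is the normalized half-bound state: outside $\supp U$ the function $u$ equals $1$ on $\Real_-$ and $\theta$ on $\Real_+$, i.e.\ it coincides with $\Theta$ there, and $u\notin L^2$, so we must bend $u$ into $L^2$ by multiplying by an exponential tail $e^{-\lm\alpha_\lm k|x|}$ (with the sign of $k$ dictating that $k>0$ is the relevant case, which is exactly what the second condition in~\eqref{CndU2Th4} secures once one checks the sign of the correction). Concretely I would take $\psi_\lm(x)=u(x)\,\phi_\lm(x)+\text{(corrector)}$, where $\phi_\lm(x)=e^{-\lm\alpha_\lm k\,x}$ for $x>0$ and $e^{\lm\alpha_\lm k\,x}$ for $x<0$, so that away from the origin $\psi_\lm$ solves $-\psi''+U\psi=-\lm^2\alpha_\lm^2k^2\psi$ to leading order.

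The heart of the matter is the matching across the thin layer where the potential $\lm\alpha_\lm V(\alpha_\lm x)$ lives: this potential has mass $\lm\int_\Real V\,dx$ but, crucially, by the first equation in~\eqref{CndU2Th4} the weighted mass $\int_{\Real_-}V\,dx+\theta^2\int_{\Real_+}V\,dx$ vanishes, so the leading $O(\lm)$ jump in the derivative that a $\delta$-like term would normally produce is cancelled against the jump forced by the exponential tails; the surviving effect is of the next order and is governed by $V(0)$ and by the discrepancy $\int_\Real(u^2-\Theta^2)\,dx$ between $u$ and its asymptotic step function $\Theta$. I would make this quantitative by testing the eigenvalue equation against $\psi_\lm$ itself: the Rayleigh-quotient identity
\[
  \frac{\int_\Real\big(|\psi_\lm'|^2+U|\psi_\lm|^2+\lm\alpha_\lm V(\alpha_\lm x)|\psi_\lm|^2\big)\,dx}{\int_\Real|\psi_\lm|^2\,dx}
\]
should equal $-\lm^2\alpha_\lm^2k^2+o(\lm^2\alpha_\lm^2)$ once one (a) uses that $u$ is a half-bound state to handle $\int(|u'|^2+U u^2)$, (b) computes $\int_\Real|\psi_\lm|^2\,dx\sim\frac{1}{\lm\alpha_\lm k}\cdot\frac{\theta^2+1}{2}$ from the exponential tails, and (c) expands $\int\lm\alpha_\lm V(\alpha_\lm x)u^2\phi_\lm^2\,dx$; here the substitution $y=\alpha_\lm x$ turns this into $\lm\int V(y)\,u^2(y/\alpha_\lm)\phi_\lm^2(y/\alpha_\lm)\,dy$, and since $\alpha_\lm\to0$ the argument $y/\alpha_\lm$ runs off to $\pm\infty$, so $u^2(y/\alpha_\lm)\to\Theta^2(y)$ pointwise. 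That is why $\int_\Real(u^2-\Theta^2)\,dx$ appears: writing $u^2=\Theta^2+(u^2-\Theta^2)$, the $\Theta^2$ part gives $\lm(\int_{\Real_-}V+\theta^2\int_{\Real_+}V)=0$ by hypothesis, while the remainder, after using continuity of $V$ at $0$ and $\phi_\lm\to1$, contributes $\lm\alpha_\lm V(0)\int_\Real(u^2-\Theta^2)\,dx+o(\lm\alpha_\lm)$ — note the extra $\alpha_\lm$ produced by undoing the change of variables on the $(u^2-\Theta^2)$ factor, which is integrable in the original variable.

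The main obstacle, and the place where the hypothesis $\lm^{1/4}\alpha_\lm^{-1}\to0$ (equivalently $\alpha_\lm\gg\lm^{1/4}$) enters, is controlling the error terms uniformly: the corrector needed to make $\psi_\lm\in W_2^2$ and to fix the residual mismatch in $\psi_\lm'$ at the endpoints of $\supp U$ and across the potential layer produces errors of competing sizes — roughly $O(\lm^2\alpha_\lm)$ from the second-order expansion of the tails inside $\supp V/\alpha_\lm$, $O(\lm\alpha_\lm^2)$ from the Taylor expansion of $\phi_\lm(y/\alpha_\lm)=1-\lm k\,y+\dots$, and $O(\lm^2)$ from the curvature of $u$ against the tails — and one must verify that after dividing by $\norm{\psi_\lm}^2\sim(\lm\alpha_\lm)^{-1}$ all of these are genuinely $o(\lm^2\alpha_\lm^2)$, which is precisely where $\alpha_\lm^4/\lm\to\infty$ is used (it makes $\lm\alpha_\lm^2\cdot\lm\alpha_\lm=\lm^2\alpha_\lm^3=o(\lm^2\alpha_\lm^2)\cdot(\alpha_\lm)\to0$ balanced correctly, etc.). I would organize this by splitting $\Real$ into the fixed region $\supp U$, the shrinking region $\alpha_\lm^{-1}\supp V$, and the exponential-tail region, estimating $(H_\lm-\mu_\lm)\psi_\lm$ on each, and tracking powers of $\lm$ and $\alpha_\lm$; establishing that the dominant surviving term is exactly $k$ as claimed, with everything else subordinate, is the bookkeeping-heavy but conceptually routine part. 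Finally, a matching lower bound argument — showing the constructed $e_\lm$ is the only eigenvalue near $0$ and that $\mu_\lm$ cannot be merely close to a different part of the spectrum — follows from the finiteness of the negative spectrum and a min-max comparison, exactly as in the proof of Theorem~\ref{TheoremOrder2}.
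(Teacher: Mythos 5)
Your overall strategy coincides with the paper's: build a quasimode $(-\lm^2\alpha_\lm^2k^2,\psi_\lm)$ from the half-bound state glued to decaying exponential tails plus a corrector, identify $k$ from the expansion of $\int_\Real V(\alpha_\lm\cdot)u^2\,dx$, and conclude via the spectral-distance lemma. Your computation of that integral --- splitting $u^2=\Theta^2+(u^2-\Theta^2)$, killing the $\Theta^2$ part with the first condition in \eqref{CndU2Th4}, and extracting $\alpha_\lm V(0)\int_\Real(u^2-\Theta^2)\,dx$ from the $\delta$-like limit --- is exactly the paper's key proposition, and the leading-order constant is right.

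The gap is in the error analysis, in two respects. First, the Rayleigh-quotient identity you propose only controls $\langle(H_\lm-\mu_\lm)\psi_\lm,\psi_\lm\rangle$, whereas the lemma you invoke requires the norm bound $\norm{(H_\lm-\mu_\lm)\psi_\lm}\le\delta_\lm\norm{\psi_\lm}$ with $\delta_\lm=o(\lm^2\alpha_\lm^2)$; the former is strictly weaker, and your list of error magnitudes is an accounting of quadratic-form errors. Second, and more seriously, you leave the corrector unspecified, yet the dominant residual term lives precisely there: the corrector must solve $-v''+Uv=-V(\alpha_\lm x)u$ across the \emph{expanding} interval $(-b\alpha_\lm^{-1},b\alpha_\lm^{-1})$ and consequently grows like $\alpha_\lm^{-2}$, so the cross term $\lm^2\alpha_\lm^2\,V(\alpha_\lm\cdot)\,v_\lm$ contributes $\sim\lm^2\alpha_\lm^{-1/2}$ to $\norm{r_\lm}$; after dividing by $\norm{\psi_\lm}\sim(\lm\alpha_\lm)^{-1/2}$ this gives relative accuracy $\sim\lm^{1/2}\alpha_\lm^{-2}$ times $\omega_\lm^2$, which is exactly, and only, where the hypothesis $\lm^{1/4}\alpha_\lm^{-1}\to0$ is used. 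None of the three error terms you list produces this constraint (your parenthetical $\lm^2\alpha_\lm^3=o(\lm^2\alpha_\lm^2)$ holds for free since $\alpha_\lm\to0$), so as written the argument neither constructs the object on which the main estimate must be performed nor explains the exponent $1/4$. The closing ``uniqueness/min-max'' step is unnecessary: the quasimode lemma already places an eigenvalue in $[-\omega_\lm^2-\delta_\lm,\,-\omega_\lm^2+\delta_\lm]\subset(-\infty,0)$, where the spectrum is discrete.
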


The last theorem concerns the absorption of an eigenvalue by the continuous spectrum when the support of perturbation $\lm\alpha_\lm V(\alpha_\lm \cdot)$ extends to the whole line in the limit.

\begin{rem}
  The assertions of Theorems \ref{TheoremAlphaPositive}-\ref{TheoremAlphaGoesToZero} are not valid for any potential $V$, if  $U$ is trivial. Indeed, in this case $u=1$ and  we see that $u'(0)=0$ and $u^2-\Theta^2=0$ in \eqref{CnduuxV} and \eqref{CndU2Th4} respectively.
  As for \eqref{cndxV}, after integrating by parts we obtain
 \begin{equation*}
   \int_\Real x V'(\alpha x)\,dx=-\frac{1}{\alpha}\int_\Real V(\alpha x)\,dx<0
\end{equation*}
that contradicts to condition $\int_\Real V(\alpha x)\,dx=0$. Consequently, these results can not be compared with those of Simon, when the small eigenvalue is analytic at $\lm=0$.
\end{rem}

\begin{rem}
 The assumption $\lm^{-1/3}(\alpha_\lm-\alpha)\to \infty$ in Theorem~\ref{TheoremAlphaPositive}, as well as the similar assumptions $\alpha_\lm=o(\lm^{-1/3})$ and $\lm^{1/4}\alpha_\lm^{-1}\to 0$  in Theorems~\ref{TheoremAlphaGoesToInfty} and ~\ref{TheoremAlphaGoesToZero} respectively, is of a purely technical nature and is related to  our approximation method for eigenfunctions. For instance, Theorem~\ref{TheoremAlphaPositive} does not cover the case $\alpha_\lm=1$ which has been studied by Klaus \cite{Klaus:1982}  and in which the threshold phenomenon exists. To drop or at least to weaken these conditions, we  have to make up  some additional assumptions on the limiting behaviour of $\alpha_\lm$;  we must also look for more precise asymptotic approximation to the negative  eigenvalue and the corresponding eigenfunction. 
\end{rem}


\section{Proof of Theorem \ref{TheoremOrder2}}

We start with an auxiliary assertion. Let us consider operator $\cH(\kappa,\beta)$ defined by  $\cH(\kappa,\beta)\phi=-\phi''$ on functions $\phi$ in $W_2^2(\Real\setminus\{0\})$, subject to the coupling conditions at the origin
\begin{equation}\label{PointInteraction}
    \begin{pmatrix}
         \phi(+0)\\ \phi'(+0)
    \end{pmatrix} =
    \begin{pmatrix}
            \kappa & 0 \\
            \beta & \kappa^{-1}
    \end{pmatrix}
    \begin{pmatrix}
      \phi(-0)\\ \phi'(-0)
    \end{pmatrix}.
\end{equation}

\begin{lem}\label{LemCHeiv}
If $\kappa \beta<0$, then operator $\cH(\kappa,\beta)$ possesses a unique eigenvalue $$E=-\left(\frac{\kappa\beta}{\kappa^2+1}\right)^2.$$
\end{lem}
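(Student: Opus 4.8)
The plan is to diagonalise the problem explicitly on the two half-lines. Since the transfer matrix in \eqref{PointInteraction} has determinant $\kappa\cdot\kappa^{-1}=1$, the coupling conditions define a self-adjoint realisation of $-d^2/dx^2$ (a point interaction of the general one-parameter-plus-scaling type), and since the perturbation is supported at a point, $\sigma_{ess}(\cH(\kappa,\beta))=[0,\infty)$; hence any eigenvalue must be negative. So I would write a prospective eigenvalue as $E=-k^2$ with $k>0$ and look for an $L_2(\Real)$ solution of $-\phi''=E\phi$ on $\Real\setminus\{0\}$.

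Next I would use the fact that on $\Real_-$ the only solutions decaying at $-\infty$ are multiples of $e^{kx}$, and on $\Real_+$ the only ones decaying at $+\infty$ are multiples of $e^{-kx}$. Writing $\phi(x)=Ae^{kx}$ for $x<0$ and $\phi(x)=Be^{-kx}$ for $x>0$, one has $\phi(-0)=A$, $\phi'(-0)=kA$, $\phi(+0)=B$, $\phi'(+0)=-kB$. Substituting into \eqref{PointInteraction} gives the linear system $B=\kappa A$ and $-kB=\beta A+\kappa^{-1}kA$. Eliminating $B$ and dividing by $A\neq0$ (a nonzero eigenfunction forces $A\neq0$) yields the single scalar equation $-k\kappa=\beta+\kappa^{-1}k$, i.e. $k(\kappa+\kappa^{-1})=-\beta$, so
\begin{equation*}
  k=-\frac{\beta}{\kappa+\kappa^{-1}}=-\frac{\kappa\beta}{\kappa^2+1}.
\end{equation*}
This $k$ is admissible (that is, strictly positive) precisely when $\kappa\beta<0$, which is the hypothesis; then $E=-k^2=-\bigl(\kappa\beta/(\kappa^2+1)\bigr)^2$, and the corresponding eigenfunction is one-dimensional, determined up to the constant $A$.

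Finally I would record uniqueness: the computation shows there is exactly one value of $k>0$ for which a nontrivial decaying solution matching the coupling conditions exists, so the negative spectrum consists of the single simple eigenvalue above; combined with the absence of nonnegative eigenvalues (for $E=0$ the solutions are affine and for $E>0$ oscillatory, neither lying in $L_2(\Real)$), this gives the claim. I do not expect any genuine obstacle here; the only point requiring a word of care is the justification that $\cH(\kappa,\beta)$ is self-adjoint with essential spectrum $[0,\infty)$, which is standard point-interaction theory and can be cited, after which everything reduces to the elementary matching computation above.
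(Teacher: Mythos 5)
Your proposal is correct and follows essentially the same route as the paper: write the candidate eigenfunction as decaying exponentials on each half-line, impose the coupling conditions \eqref{PointInteraction}, and solve the resulting linear system for the unique admissible decay rate $\omega=-\kappa\beta/(\kappa^2+1)>0$. The only cosmetic difference is that you eliminate the constant $B$ directly where the paper sets the determinant of the $2\times2$ matching matrix to zero, and you make explicit the (standard) remarks on self-adjointness and $\sigma_{ess}=[0,\infty)$ that the paper leaves implicit.
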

\begin{proof}
 We look for a nontrivial $L^2(\Real)$-solution of equation $-\phi''+\omega^2\phi=0$ satisfying \eqref{PointInteraction}. If such a solution exists, then it has the form $\phi(x)=c_1 e^{\omega x}$ for $x<0$ and  $\phi(x)=c_2 e^{-\omega x}$ for $x>0$, where $\omega$ is
 positive. Substituting $\phi$ into \eqref{PointInteraction} yields
\begin{equation*}
\begin{pmatrix}
  \kappa &  -1\\
  \beta+\kappa^{-1}\omega & \omega
\end{pmatrix}
\begin{pmatrix}
  c_1\\ c_2
\end{pmatrix}
=0.
\end{equation*}
Since $\phi$ is a non-zero function, the matrix must be  degenerate. Hence
\begin{equation*}
(\kappa+\kappa^{-1})\,\omega+\beta=0.
\end{equation*}
The last equation admits a positive solution $\omega=-\frac{\kappa\beta}{\kappa^2+1}$ if  $\kappa\beta<0$.
Therefore  $\cH(\kappa,\beta)$ has  negative eigenvalue  $E=-\left(\frac{\kappa\beta}{\kappa^2+1}\right)^2$ with the eigenfunction
$\phi(x)=e^{\omega x}$ if $x<0$ and
$\phi(x)=\kappa e^{-\omega x}$ if $ x>0$.
\end{proof}

To prove Theorem~\ref{TheoremOrder2}, apply the convergence results for
Schr\"{o}dinger operators with $(a \delta'+b \delta)$-like potentials. Let us consider the two-parameter family of operators
  \begin{equation*}
    S_\en= -\frac{d^2}{dt^2}+\lm^{-2}U(\lm^{-1}t)
    +\nu^{-1}V(\nu^{-1}t), \quad
\dmn S_\en=W_2^2(\mathbb{R}).
\end{equation*}
In \cite{GolovatyIEOT2013} the norm resolvent convergence of $S_\en$ was established as positive parameters $\lm$ and $\nu$ tend to zero simultaneously. If operator $H_0$ possesses a zero-energy resonance, then there exist $3$~different cases of the limiting behaviour of $S_\en$ depending on the limit of ratio $\nu/\lm$. Rescaling $x=t/\lm$ of the coordinate yields
  \begin{equation*}
    S_\en= \lm^{-2}\left(-\frac{d^2}{dx^2}+U(x)
    +\lm^2\nu^{-1}V(\lm\nu^{-1}x)\right)
    =\lm^{-2}\left(H_0+\lm^2\nu^{-1}V(\lm\nu^{-1}x)\right).
\end{equation*}
If we will connect the parameters $\nu$ and $\lm$ by relation
$\nu_\lm=\lm/\alpha_\lm $, then $S_\lm=\lm^{-2}H_\lm$,  where $S_\lm$ stands for $S_{\lm, \nu_\lm}$.

First assume  $\alpha_\lm =\frac{\lm}{\nu_\lm}\to \alpha$  as $\lm\to 0$ and $\alpha>0$. In this case $S_\lm$ converge in the norm resolvent sense
to operator $\cH(\theta,\beta)$ associated with point interaction \eqref{PointInteraction}, where $\kappa=\theta$ and
\begin{equation*}
 \beta =\alpha\theta^{-1} \int_\Real V(\alpha x)\, u^2(x)\,dx,
\end{equation*}
(see \cite[Theorem 4.1]{GolovatyIEOT2013}). In view of Lemma~\ref{LemCHeiv}, operator $\cH(\theta,\beta)$ has a negative eigenvalue $E$, because
\begin{equation*}
  \theta\beta=\alpha\, \int_\Real V(\alpha x)\, u^2(x)\,dx<0
\end{equation*}
by  assumption (i) of the theorem. In addition, $E=-k^2$ with
\begin{equation*}
k=\frac{\theta\beta}{\theta^2+1}=\frac{\alpha}{\theta^2+1}\int_\Real V(\alpha x)\, u^2(x)\,dx.
\end{equation*}
The norm resolvent convergence $S_\lm\to \cH(\theta,\beta)$ implies that there exists an eigenvalue $E_\lm$ of $S_\lm$  such that $E_\lm\to E$ as $\lm\to 0$. Since $S_\lm= \lm^{-2}H_\lm$, operator $H_\lm$ possesses a negative eigenvalue $e_\lm=\lm^2E_\lm$ such that
$e_\lm=-\lm^2(k^2+o(1))$ as  $\lm\to 0$.

Now let $\alpha_\lm \to \infty$  as $\lm\to 0$. In view of Theorem~5.1 \cite{GolovatyIEOT2013}, operators $S_\lm$ converge in the norm resolvent sense to $\cH(\theta,\beta)$, where
\begin{equation*}
 \beta =\theta^{-1}u^2(0) \int_\Real V\,dx.
\end{equation*}
The limit operator admits a negative eigenvalue $E=-k^2$, since
\begin{equation*}
  \theta\beta=u^2(0)\,\int_\Real V\,dx<0
\end{equation*}
by assumption. In this case we have
\begin{equation*}
  k=\frac{\theta\beta}{\theta^2+1}=\frac{u^2(0)}{\theta^2+1}\int_\Real V\,dx.
\end{equation*}
Therefore  $H_\lm= \lm^{2}S_\lm$ possesses a negative eigenvalue of order $O(\lm^2)$ as $\lm\to 0$.

Finally, in the case $\alpha_\lm \to 0$ we apply Theorem~3.1 \cite{GolovatyIEOT2013}. The family $S_\lm$ converges to $\cH(\theta,\beta)$ as $\lm\to 0$ in the sense of uniform convergence of resolvents, where
\begin{equation*}
   \beta =\theta^{-1}\,\int_{\Real_-}\kern-5ptV\,dt
    +\theta \,\int_{\Real_+}\kern-5ptV\,dt.
\end{equation*}
Condition \textit{(iii)} of the theorem ensures the existence of  negative eigenvalue $E=-k^2$ for $\cH(\theta,\beta)$ with
\begin{equation*}
  k=\frac{1}{\theta^2+1}\int_{\Real_-} \kern-5pt V\,dx +
\frac{\theta^2}{\theta^2+1}\int_{\Real_+}\kern-5pt V\,dx,
\end{equation*}
because  $\theta\beta=\int_{\Real_-}\kern-4ptV\,dt +\theta^2 \,\int_{\Real_+}\kern-4ptV\,dt<0$.
Hence there exists a small negative eigenvalue $e_\lm$ of $H_\lm$.


\section{Proof of Theorem  \ref{TheoremAlphaPositive}}\label{SecTh2}

We will apply the method of quasimodes.
Let $A$ be a self-adjoint operator in a Hilbert space $L$.
\begin{defn}
  We say a pair $(\mu, \phi)\in \Real\times \dmn A$ is a \textit{quasimode} of  operator $A$ with accuracy $\delta$, if $\|\phi\|_L=1$ and $\|(A-\mu)\phi\|_L\leq\delta$.
\end{defn}

\begin{lem}\cite[p.139]{PDEVinitiSpringer}\label{LemQuasimodes}
   Assume $(\mu, \phi)$ is a quasimode of $A$ with accuracy $\delta>0$  and  the spectrum of $A$ is discrete in  interval
$[\mu-\delta, \mu+\delta]$. Then there exists an eigenvalue $\lambda$ of  $A$ such that $|\lambda-\mu|\leq\delta$.
\end{lem}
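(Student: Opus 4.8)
The plan is to read the conclusion straight off the spectral theorem for the self‑adjoint operator $A$; I do not anticipate a real obstacle, the only things to be careful about being that $\sigma(A)$ is nonempty and closed and that a point of the spectrum lying in the given interval is automatically an eigenvalue.

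First I would bring in the scalar spectral measure $\rho_\phi$ of $A$ attached to the quasimode vector $\phi\in\dmn A$: it is a finite positive Borel measure on $\Real$ with $\rho_\phi(\Real)=\|\phi\|_L^2=1$ and $\supp\rho_\phi\subset\sigma(A)$, and it satisfies $\|g(A)\phi\|_L^2=\int_\Real|g(t)|^2\,d\rho_\phi(t)$ for every bounded Borel $g$ and, by the usual limiting argument, for $g(t)=t-\mu$ as well, since $\phi\in\dmn A$. Applying this with $g(t)=t-\mu$ and using that $(t-\mu)^2\ge\operatorname{dist}(\mu,\sigma(A))^2$ on $\supp\rho_\phi\subset\sigma(A)$ gives
\[
  \delta^2\ \ge\ \|(A-\mu)\phi\|_L^2\ =\ \int_\Real(t-\mu)^2\,d\rho_\phi(t)\ \ge\ \operatorname{dist}(\mu,\sigma(A))^2\,\rho_\phi(\Real)\ =\ \operatorname{dist}(\mu,\sigma(A))^2 ,
\]
hence $\operatorname{dist}(\mu,\sigma(A))\le\delta$. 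Since $A$ is self‑adjoint and $L\neq\{0\}$, its spectrum is a nonempty closed subset of $\Real$, so the infimum defining this distance is attained: there is $\lambda\in\sigma(A)$ with $|\lambda-\mu|=\operatorname{dist}(\mu,\sigma(A))\le\delta$, i.e.\ $\lambda\in[\mu-\delta,\mu+\delta]$.

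The last step is to upgrade $\lambda$ from a spectral point to an eigenvalue, and this is precisely what the remaining hypothesis supplies: the spectrum of $A$ in $[\mu-\delta,\mu+\delta]$ being discrete means it consists of isolated points, each an eigenvalue of $A$ of finite multiplicity; since $\lambda\in\sigma(A)\cap[\mu-\delta,\mu+\delta]$, it is one of these, so $\lambda$ is an eigenvalue of $A$ with $|\lambda-\mu|\le\delta$, as claimed. (A contradiction argument is equally available and perhaps more transparent: if $[\mu-\delta,\mu+\delta]$ contained no eigenvalue then by discreteness $[\mu-\delta,\mu+\delta]\cap\sigma(A)=\emptyset$, so $\mu\in\rho(A)$ and $\|(A-\mu)^{-1}\|=\operatorname{dist}(\mu,\sigma(A))^{-1}<\delta^{-1}$, whence $1=\|\phi\|_L=\|(A-\mu)^{-1}(A-\mu)\phi\|_L\le\|(A-\mu)^{-1}\|\,\|(A-\mu)\phi\|_L<\delta^{-1}\cdot\delta=1$, which is impossible.)
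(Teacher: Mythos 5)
Your proof is correct, and your parenthetical contradiction argument is precisely the paper's own proof: the paper invokes $\|(A-\mu I)^{-1}\|=\operatorname{dist}(\mu,\sigma(A))^{-1}$, tests it on $\psi=(A-\mu)\phi$ to get $\operatorname{dist}(\mu,\sigma(A))\le\delta$, and lets the discreteness hypothesis upgrade the spectral point to an eigenvalue. Your primary route, computing $\|(A-\mu)\phi\|^2=\int_\Real(t-\mu)^2\,d\rho_\phi(t)\ge\operatorname{dist}(\mu,\sigma(A))^2$ via the spectral measure, is just a direct derivation of the same resolvent-norm fact, so the two arguments are essentially the same.
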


The proof of the lemma is simple. Indeed, if  $\mu\not\in \sigma(A)$, then  the distance $d_\mu$ from $\mu$ to the spectrum of $A$  is computed as
\begin{equation*}
  d_\mu^{-1}=\|(A-\mu I)^{-1}\|
  =\sup_{\psi\neq0}\frac{\|(A-\mu I)^{-1}\psi\|_L}{\|\psi\|_L}.
\end{equation*}
Here $\psi$ is an arbitrary vector of $L$. Taking $\psi=(A-\mu)\phi$, we deduce
\begin{equation*}
  d_\mu^{-1}\geq \frac{\|\phi\|_L}{\|(A-\mu I)\phi\|_L}\geq \delta^{-1},
\end{equation*}
and therefore $d_\mu\leq \delta$, from which the assertion
of the lemma follows.

Without loss  of generality we here and henceforth assume that
the supports of potentials $U$ and $V$ lie within $\cI=(-b,b)$ for some $b>0$.
Then a half-bound state $u$ is  constant outside $\cI$ and its restriction to $\cI$ is a non-trivial solution of  the problem
\begin{equation}\label{NeumanProblemWithAlpha}
     - u'' +Uu= 0, \quad t\in \cI,\qquad u'(-b)=0, \quad u'(b)=0.
\end{equation}
Moreover, since $u$ is a normalized half-bound state,  $u(-b)=1$ and  $u(b)=\theta$.

We will construct a family of quasimodes $(-\omega^2_\lm, \phi_\lm)$ of $H_\lm$  as follows. Let as introduce infinitesimal $\eps_\lm= \alpha_\lm -\alpha$ as $\lm\to 0$ and  set  $\omega_\lm=\lm \eps_\lm k_\lm$ for  positive $k_\lm$.  We also write $\phi_\lm=\psi_\lm/\|\psi_\lm\|$, where
 \begin{equation*}
   \psi_\lm(x)=
   \begin{cases}
     e^{\omega_\lm(x+b)}& \text{for } x<-b,\\
     u(x)+\lm v(x)+\lm  \eps_\lm w_{\lm}(x)  & \text{for } |x|<b,\\
     a_{0,\lm}\,e^{-\omega_\lm(x-b)}+a_{1,\lm}\rho(x-b)& \text{for } x>b.
   \end{cases}
 \end{equation*}
 Here and subsequently, $\|\,\cdot\,\|$ stands for the norm in $L_2(\Real)$.
 Suppose that $v$ and $w_\lm$ are  solutions of the problems
 \begin{align}\label{CPv1}
   &-v''+Uv=-V_0u, \qquad v(-b)=0, \;\; v'(-b)=0;
 \\ \label{CPv2}
   &-w''+Uw=-\eps_\lm ^{-1}(V_\lm-V_0)u, \qquad w(-b)=0, \;\; w'(-b)=k_\lm
\end{align}
respectively; $V_0=\alpha V(\alpha \,\cdot)$ denotes the limit of potentials $V_\lm=\alpha_\lm V(\alpha_\lm \cdot)$ as $\lm\to 0$ and $a_{j,\lm}$ are some real quantities.
Note that  the support of $V(\alpha_\lm  \,\cdot)$  lies within $\cI$ for $\lm$ small enough as well as $\eps_\lm$ is different from zero, because  $\lm^{-1/3}\eps_\lm\to \infty$.
Function $\rho\colon \Real_+\to \Real$ is smooth, $\rho(0)=0$, $\rho'(0)=1$ and $\rho(x)=0$ for $x\geq 1$.  This function will be used to correct the discontinuity of $\psi'_\lm$.
We also note that $u$, $v$ and $w_\lm$ belong to $W_2^2(\cI)$ as solutions of $-y''+Uy=f$ with $f\in L_2(\cI)$.

Let us show that $a_{0,\lm}$,  $a_{1,\lm}$ and $k_\lm$ can be chosen  in such a way that $\psi_\lm$ will belong to $\dmn H_\lm$.
By construction, $\psi_\lm$ is continuously differentiable at point $x=-b$. We now  make $\psi_\lm$ and $\psi_\lm'$ continuous at $x=b$.
Since $u(b)=\theta$ and $\rho(0)=0$, we have
$[\psi_\lm]_b=a_{0,\lm}-\theta-\lm v(b)-\lm  \eps_\lm w_\lm(b)$,
where  $[\,\cdot\,]_x$ denotes the jump of a function at the point $x$. Set $a_{0,\lm}=\theta+\lm v(b)+\lm  \eps_\lm w_\lm(b)$. Next, we derive
\begin{multline*}
  [\psi'_\lm]_b=-\omega_\lm a_{0,\lm}+a_{1,\lm}\rho'(0)-\lm v'(b)-\lm  \eps_\lm w_\lm'(b)\\
  = -\lm v'(b)-\lm \eps_\lm \big(w_\lm'(b)+\theta k_\lm\big) +a_{1,\lm}-\lm^2 \eps_\lm k_\lm v(b)-\lm^2  \eps_\lm^2 k_\lm w_\lm(b),
\end{multline*}
since  $\rho'(0)=1$.
Multiplying the equation in  \eqref{CPv1} by half-bound state $u$ and integrating by parts twice over $\cI$ yield
\begin{equation*}
  \theta v'(b)=\int_{\cI}V_0u^2\,dx
  =\alpha\int_{\Real}V(\alpha x)u^2(x)\,dx,
\end{equation*}
and hence $v'(b)=0$, by \eqref{cndxV}.
Applying the same considerations to \eqref{CPv2}, we obtain
\begin{equation}\label{ThetaV'th2}
    \theta w_\lm'(b)-k_\lm=\eps_\lm ^{-1}\int_{\Real} (V_\lm-V_0)u^2\,dx.
\end{equation}
Note that we will often replace integrals over supports of integrands with integrals over $\Real$ and vice versa without commenting on it.
In order to obtain the continuity of $\psi'_\lm$ at $x=b$, we  set
\begin{equation}\label{KThetaV2}
 w_\lm'(b)=-\theta k_\lm, \qquad a_{1,\lm}=\lm^2 \eps_\lm k_\lm \,\big(v(b)+\eps_\lm  w_\lm(b)\big),
\end{equation}
since $v'(b)=0$.
Also, combining \eqref{ThetaV'th2} and the first equality in \eqref{KThetaV2} yields
\begin{equation}\label{KTheta1}
   k_\lm=-\frac{1}{\eps_\lm(\theta^2+1)} \int_{\Real} (V_\lm-V_0)u^2\,dx.
\end{equation}

\begin{prop}\label{PropAsympKlm}
 Under the assumptions of Theorem~\ref{TheoremAlphaPositive}, the value $k_\lm$, given by \eqref{KTheta1}, admits asymptotics
 \begin{equation*}
   k_\lm=-\frac{\alpha}{\theta^2+1}\int_\Real xV'(\alpha \,\cdot)\,u^2\,dx+o(1)\quad\text{as }\lm\to 0.
 \end{equation*}
\end{prop}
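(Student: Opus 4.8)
The plan is to read the formula \eqref{KTheta1} for $k_\lm$ as a difference quotient in the scaling parameter, whose limit as $\lm\to 0$ is the corresponding derivative at $\alpha$; the first identity in \eqref{cndxV} is precisely what removes the unwanted term from that derivative. First I would pass to the variables $y=\alpha_\lm x$ and $y=\alpha x$ in the two integrals composing \eqref{KTheta1}, which gives
\[
  \int_\Real (V_\lm-V_0)\,u^2\,dx
  =\int_\Real V(y)\bigl(g(y/\alpha_\lm)-g(y/\alpha)\bigr)\,dy,
  \qquad g:=u^2 .
\]
By \eqref{NeumanProblemWithAlpha} the normalized half-bound state $u$ is constant outside $\cI$ and satisfies $u'(\pm b)=0$, while $u\in W_2^2(\cI)$; hence $g=u^2$ extends to a function of class $C^1(\Real)$ with continuous derivative $g'=2uu'$ supported in $\overline{\cI}$.

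Next I would apply the fundamental theorem of calculus to the $C^1$ function $g$ and divide by $\eps_\lm$,
\[
  \frac{g(y/\alpha_\lm)-g(y/\alpha)}{\eps_\lm}
  =\frac{1}{\eps_\lm}\int_{y/\alpha}^{y/\alpha_\lm} g'(r)\,dr .
\]
Since $\alpha>0$ and $\eps_\lm=\alpha_\lm-\alpha\to 0$ (with $\eps_\lm\ne 0$ for small $\lm$, as $\lm^{-1/3}\eps_\lm\to\infty$), the endpoint $y/\alpha_\lm$ converges to $y/\alpha$ uniformly for $y$ in the fixed bounded set $\supp V$, so the uniform continuity of $g'$ there yields
\[
  \frac{g(y/\alpha_\lm)-g(y/\alpha)}{\eps_\lm}\To -\frac{y}{\alpha^2}\,g'(y/\alpha)
  \qquad\text{uniformly in }y\in\supp V .
\]
Because $V$ is bounded with compact support, the passage to the limit under the integral sign is immediate and gives
\[
  \frac{1}{\eps_\lm}\int_\Real (V_\lm-V_0)\,u^2\,dx
  \To -\frac{1}{\alpha^2}\int_\Real y\,V(y)\,g'(y/\alpha)\,dy .
\]

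Finally I would undo the change of variables by $y=\alpha x$ and integrate by parts once. Using $g'(r)=2u(r)u'(r)$ one rewrites the right-hand side as $-\int_\Real xV(\alpha x)\,(u^2)'(x)\,dx$; the boundary terms vanish because $V$ has compact support, and since $\bigl(xV(\alpha x)\bigr)'=V(\alpha x)+\alpha xV'(\alpha x)$ this equals
\[
  \int_\Real V(\alpha\,\cdot)\,u^2\,dx+\alpha\int_\Real xV'(\alpha\,\cdot)\,u^2\,dx
  =\alpha\int_\Real xV'(\alpha\,\cdot)\,u^2\,dx,
\]
the first integral being zero by \eqref{cndxV}. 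Inserting this into \eqref{KTheta1} produces the stated asymptotics. The only genuinely delicate points are the regularity bookkeeping that makes $u^2$ globally $C^1$ (so that the difference quotient of $g$ converges, in fact uniformly on $\supp V$) and the justification of the limit under the integral sign; the remaining steps are two changes of variables and a single integration by parts.
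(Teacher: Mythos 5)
Your argument is correct, and it reaches the conclusion by a genuinely different route from the paper. The paper keeps the integration variable fixed and writes $\eps_\lm^{-1}(V_\lm-V_0)=V(\alpha_\lm\,\cdot)+\alpha\,\eps_\lm^{-1}\bigl(V(\alpha_\lm\,\cdot)-V(\alpha\,\cdot)\bigr)$, then invokes the $L^2$-continuity of the difference-quotient operator for $W_2^1$-functions to get convergence to $V(\alpha\,\cdot)+\alpha xV'(\alpha\,\cdot)$ in $L^2(\Real)$; pairing with $u^2$ (bounded, and the integrand compactly supported) and using the first condition in \eqref{cndxV} finishes the proof. You instead move the scaling onto $u^2$ by a change of variables and differentiate $g=u^2$, which is where the regularity genuinely lives: $u\in W_2^2(\cI)$ together with $u$ constant and $u'=0$ outside $\cI$ makes $g$ globally $C^1$ with $g'$ uniformly continuous and compactly supported, so only the fundamental theorem of calculus and passage to the limit over the bounded set $\supp V$ are needed. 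The Sobolev regularity of $V$ enters only at the very end, to integrate by parts and put the limit $-\int_\Real xV(\alpha\,\cdot)(u^2)'\,dx$ into the stated form $\alpha\int_\Real xV'(\alpha\,\cdot)u^2\,dx$; indeed the two computations are related precisely by that integration by parts. What your version buys: it is more elementary (no difference-quotient lemma for Sobolev functions) and it exhibits the limit in a form that exists for merely bounded, compactly supported $V$. What the paper's version buys: it is shorter and lands directly on the integrand appearing in the statement. Both are complete proofs.
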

\begin{proof}
Since  $\alpha_\lm=\alpha+\eps_\lm$, we have
\begin{multline*}
   \eps_\lm^{-1}(V_\lm(x)-V_0(x))=\eps_\lm^{-1}(\alpha_\lm V(\alpha_\lm x)   -\alpha V(\alpha x))\\
      = \eps_\lm^{-1}((\alpha+\eps_\lm) V(\alpha_\lm x)
   -\alpha V(\alpha x))
     =V(\alpha_\lm x)+\alpha \eps_\lm^{-1}(V(\alpha_\lm x)-V(\alpha x)).
  \end{multline*}
Then we deduce from the continuity of the finite difference operator  that
\begin{multline*}
  V(\alpha_\lm x)+\alpha\eps_\lm^{-1}(V(\alpha_\lm x)-V(\alpha x))\\
  =V(\alpha_\lm x)+
 \alpha x\, \frac{V(\alpha_\lm x)-V(\alpha x)}{(\alpha_\lm -\alpha)x}
  \to V(\alpha x)+\alpha xV'(\alpha x)\quad\text{as }\lm\to 0
\end{multline*}
in $L^2(\Real)$, provided $V\in W_2^1(\Real)$. Recalling $\int_\Real V(\alpha \,\cdot)\,u^2\,dx=0$, we obtain
\begin{equation*}
  \eps_\lm^{-1} \int_{\Real} (V_\lm-V_0)u^2\,dx\to \int_{\Real}(V(\alpha \,\cdot)+\alpha xV'(\alpha \,\cdot))u^2\,dx
  =\alpha \int_{\Real}xV'(\alpha \,\cdot)u^2\,dx,
\end{equation*}
which finishes the proof.
\end{proof}

In view of the proposition above, the values of $k_\lm$, and therefore $\omega_\lm$, are positive for $\lm$ small enough, if the inequality in \eqref{cndxV} holds. Hence $\psi_\lm$ is a $L_2(\Real)$-function, because both the exponents $e^{\pm\omega_\lm(x\pm b)}$ decrease as $x\to \mp\infty$ respectively.
Moreover we chose $a_{0,\lm}$ and $a_{1,\lm}$ so that
$\psi_\lm$  belongs to $W_{2}^2(\Real)$, and hence $\psi\in\dmn H_\lm$.
Since $ \|w_\lm\|_{W_2^2(\cI)}\leq c_1(|k_\lm|+\eps_\lm ^{-1}\|(V_\lm-V_0)\|_{L_2(\cI)})\leq c_2$ by Proposition~\ref{PropAsympKlm}, we  have
\begin{equation}\label{NormPsiBB}
\|\psi_\lm\|_{L^2(\cI)}\leq c_1.
\end{equation}
In addition, an easy computation shows that
\begin{equation}\label{NormPsi}
 c\omega_\lm^{-1/2}\leq\|\psi_\lm\|\leq C \omega_\lm^{-1/2}
\end{equation}
with some constants $c$ and $C$ being independent of $\lm$.

\begin{lem}\label{lemOmegaPhiIsQM}
   The pair $(-\omega_\lm^2 , \phi_\lm)$ is a quasimode of  $H_\lm$ with accuracy $o(\omega_\lm^2)$ as $\lm\to 0$.
\end{lem}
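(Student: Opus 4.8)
The plan is to verify the quasimode estimate directly. I would split $\Real$ into the three intervals $(-\infty,-b)$, $\cI=(-b,b)$ and $(b,+\infty)$ on which $\psi_\lm$ is given by the three branches of its definition, compute $(H_\lm+\omega_\lm^2)\psi_\lm$ on each, and show that $\|(H_\lm+\omega_\lm^2)\psi_\lm\|_{L_2(\Real)}=O(\lm^2)$. Dividing by $\|\psi_\lm\|$, which by \eqref{NormPsi} is of order $\omega_\lm^{-1/2}$, then gives accuracy $O(\lm^2\omega_\lm^{1/2})$, and the last step is to check that this is $o(\omega_\lm^2)$, which holds precisely because of the hypothesis $\lm^{-1/3}\eps_\lm\to\infty$. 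Since $\psi_\lm\in\dmn H_\lm=W_2^2(\Real)$ has already been arranged, $H_\lm$ acts classically and no singular terms appear at $x=\pm b$.

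The two outer rays are easy. On $(-\infty,-b)$ both $U$ and $\lm V_\lm$ vanish and $\psi_\lm(x)=e^{\omega_\lm(x+b)}$ solves $-\psi''+\omega_\lm^2\psi=0$ exactly, so the residual is identically zero there. On $(b,+\infty)$ the potentials vanish again and the exponential part of $\psi_\lm$ is annihilated by $-d^2/dx^2+\omega_\lm^2$, leaving $(H_\lm+\omega_\lm^2)\psi_\lm=a_{1,\lm}\bigl(\omega_\lm^2\rho(\cdot-b)-\rho''(\cdot-b)\bigr)$, whose $L_2$-norm is at most $C|a_{1,\lm}|$ because $\rho$ is a fixed smooth function of compact support; by \eqref{KThetaV2}, Proposition~\ref{PropAsympKlm}, the bound $\|w_\lm\|_{W_2^2(\cI)}\le c_2$ and the embedding $W_2^2(\cI)\hookrightarrow C(\overline{\cI})$ one gets $|a_{1,\lm}|=\lm^2\eps_\lm k_\lm\,|v(b)+\eps_\lm w_\lm(b)|=O(\lm^2\eps_\lm)$, which is $o(\lm^2)$ and hence negligible.

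The crux is the interval $\cI$, where $\psi_\lm=u+\lm v+\lm\eps_\lm w_\lm$. Adding $-u''+Uu=0$ from \eqref{NeumanProblemWithAlpha}, $\lm$ times \eqref{CPv1} and $\lm\eps_\lm$ times \eqref{CPv2}, one obtains $-\psi_\lm''+U\psi_\lm=-\lm V_0u-\lm(V_\lm-V_0)u=-\lm V_\lm u$, so that $(H_\lm+\omega_\lm^2)\psi_\lm=\lm V_\lm(\psi_\lm-u)+\omega_\lm^2\psi_\lm=\lm^2V_\lm(v+\eps_\lm w_\lm)+\omega_\lm^2\psi_\lm$ on $\cI$. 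Since $\|V_\lm\|_{L^\infty(\cI)}=\alpha_\lm\|V\|_{L^\infty}$ stays bounded, $\|v\|_{L^2(\cI)}$ is a fixed constant and $\eps_\lm\|w_\lm\|_{L^2(\cI)}\to0$, the first term is $O(\lm^2)$ in $L_2(\cI)$; the second has norm $\omega_\lm^2\|\psi_\lm\|_{L^2(\cI)}=O(\omega_\lm^2)=o(\lm^2)$ by \eqref{NormPsiBB} and $\omega_\lm=\lm\eps_\lm k_\lm$ with $\eps_\lm\to0$. Collecting the three pieces yields $\|(H_\lm+\omega_\lm^2)\psi_\lm\|_{L_2(\Real)}=O(\lm^2)$.

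To finish, \eqref{NormPsi} gives $\|\phi_\lm\|=1$ and $\|(H_\lm+\omega_\lm^2)\phi_\lm\|\le C\lm^2\omega_\lm^{1/2}$; by Proposition~\ref{PropAsympKlm} and the second relation in \eqref{cndxV} the quantities $k_\lm$ tend to a positive limit, so $\omega_\lm$ is of order $\lm\eps_\lm$ and $\lm^2\omega_\lm^{1/2}/\omega_\lm^2=\lm^2\omega_\lm^{-3/2}$ is of order $\lm^{1/2}\eps_\lm^{-3/2}=(\lm^{-1/3}\eps_\lm)^{-3/2}\to0$, so that $\|(H_\lm+\omega_\lm^2)\phi_\lm\|=o(\omega_\lm^2)$, as claimed. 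The main obstacle, and really the only place any care is needed, is the bookkeeping of the competing powers of $\lm$ and $\eps_\lm$ in the $\cI$-residual against the smallness of $\omega_\lm^2$; this is exactly what the technical assumption $\lm^{-1/3}\eps_\lm\to\infty$ is there to absorb, and weakening it would force one to build a further corrector term into $\psi_\lm$.
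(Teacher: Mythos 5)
Your proposal is correct and follows essentially the same route as the paper: the same three-interval decomposition, the same cancellation of the $O(1)$, $O(\lm)$ and $O(\lm\eps_\lm)$ terms in the residual via \eqref{NeumanProblemWithAlpha}--\eqref{CPv2}, the same bound $\|r_\lm\|=O(\lm^2)$, and the same final step converting this via \eqref{NormPsi} into accuracy $O(\omega_\lm^2\,\lm^{1/2}\eps_\lm^{-3/2})=o(\omega_\lm^2)$ using $\lm^{-1/3}\eps_\lm\to\infty$. No gaps.
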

\begin{proof}
If we set $r_\lm=(H_\lm+\omega^2_\lm)\psi_\lm$,  then  $(H_\lm+\omega^2_\lm)\phi_\lm =\|\psi_\lm\|^{-1}r_\lm$.
We must estimate the $L_2$-norm of remainder $r_\lm$ in order to obtain the accuracy of the quasimodes. Let us first suppose $|x|>b$. Since the exponents $e^{\pm\omega_\lm(x\pm b)}$ are exact solutions of $-\psi''+\omega^2_\lm\psi=0$ and $\supp \rho=[0,1]$, we have
\begin{equation}\label{RlmTh2}
 r_\lm(x)=
    -a_{1,\lm}(\rho''(x-b)-\omega^2_\lm\rho(x-b))\quad\text{for }b\leq x\leq b+1
\end{equation}
and $r_\lm(x)=0$ for other $x$ from  set $\{x\colon |x|>b\}$.
Since $\rho$ and $\rho''$ are bounded on $[0,1]$,  $r_\lm$ is of order $a_{1,\lm}$ as $\lm\to 0$. Namely,
\begin{equation}\label{EstRx>b}
  \max_{|x|\geq b}|r_\lm(x)|=\max_{b\leq x\leq b+1}|r_\lm(x)|\leq c_1|a_{1,\lm}|\leq c_2 \lm^2 \eps_\lm,
\end{equation}
by \eqref{KThetaV2}. Next, recalling \eqref{NeumanProblemWithAlpha}--\eqref{CPv2}, we derive
\begin{multline*}
   r_\lm=\left(-\frac{d^2}{dx^2}+U+\lm V_\lm+\omega^2_\lm\right) \psi_\lm
   = (-u''+Uu)+\lm (-v''+Uv+V_0 u)\\
   +\lm  \eps_\lm \big(-w_\lm''+Uw_\lm+\eps_\lm ^{-1}(V_\lm-V_0)u\big)
   +\lm^2V_\lm v+ \lm^2\eps_\lm V_\lm w_\lm
   +\omega^2_\lm \psi_\lm\\
   =\lm^2\left(V_\lm v+ \eps_\lm V_\lm w_\lm
   +\eps_\lm^2 k_\lm^2 \psi_\lm\right)
\end{multline*}
for $|x|<b$.
Hence $\|r_\lm\|_{L_2(\cI)}=O(\lm^2)$ as $\lm\to 0$,
because \eqref{NormPsiBB} holds.
Finally,  $\|r_\lm\|=O(\lm ^2)$ as $\lm\to 0$, by \eqref{EstRx>b}.
Therefore using \eqref{NormPsi}, we obtain
\begin{equation*}
  \|(H_\lm+\omega^2_\lm)\phi_\lm\|
   =\|\psi_\lm\|^{-1}\|r_\lm\|\leq c_1 \lm^{5/2}\eps_\lm^{1/2}
   \leq c_2\omega_\lm^2 (\lm^{1/2}\eps_\lm^{-3/2}).
\end{equation*}
Recall we are assuming $\lm^{-1/3}\eps_\lm\to \infty$.  Hence $\lm^{1/2}\eps_\lm^{-3/2}\to 0$ as $\lm\to 0$, and the lemma follows.
\end{proof}

Since  $(-\omega^2_\lm, \phi_\lm)$ is a quasimode of $H_\lm$ with accuracy $o(\omega^2_\lm)$ and the negative semiaxis  is free of the continuous spectrum, there exists an eigenvalue $e_\lm$ of $H_\lm$ such that $|e_\lm+\omega_\lm^2|=o(\omega_\lm^2)$ as $\lm\to 0$, by Lemma~\ref{LemQuasimodes}.
Therefore the the eigenvalue of $H_\lm$ admits the asymptotics
\begin{equation*}
e_\lm=-\lm^2 \eps_\lm^2 k_\lm^2(1+o(1)) =-\lm^2(\alpha_\lm-\alpha)^2 \left(\left(\frac{\alpha}{\theta^2+1}\int_\Real xV'(\alpha \,\cdot)\,u^2\,dx\right)^2+o(1)\right)
\end{equation*}
as $\lm$ goes to zero, which proves the theorem.


\section{Proof of Theorem  \ref{TheoremAlphaGoesToInfty}}\label{SecTh3}

As in the previous section, the proof consists in the construction of  proper quasimodes  of  $H_\lm$. Since $\alpha_\lm\to \infty$, we set  $\omega_\lm=\lm k_\lm \alpha_\lm^{-1}$ for some positive $k_\lm$ and write $\phi_\lm=\psi_\lm/\|\psi_\lm\|$, where
 \begin{equation*}
   \psi_\lm(x)=
   \begin{cases}
     e^{\omega_\lm(x+b)}& \text{for } x<-b,\\
     u(x)+ \frac{\lm }{\alpha_\lm}\,v_{\lm}(x)+\frac{\lm^2 }{\alpha_\lm^2}\, w_{\lm}(x)  & \text{for } |x|<b,\\
     a_{0,\lm}\,e^{-\omega_\lm(x-b)}+a_{1,\lm}\rho(x-b)& \text{for } x>b.
   \end{cases}
 \end{equation*}
Here $u$ is the normalized half-bound state of $H_0$;
 $v_{\lm}$ and $w_{\lm}$ solve the problems
 \begin{align}\label{CPv1lm}
   &-v''+Uv=-\alpha_\lm V_\lm u, \qquad v(-b)=0, \;\; v'(-b)= k_\lm;
 \\ \label{CPv2lm}
   &-w''+Uw=-\alpha_\lm V_\lm v_{\lm}, \qquad w(-b)=0, \;\; w'(-b)=0
\end{align}
respectively. Recall that $V_\lm=\alpha_\lm V(\alpha_\lm \cdot)$.
As above, we will show that there exist $a_{0,\lm}$, $a_{1,\lm}$ and a positive quantity $k_\lm$ such that $\psi_\lm\in \dmn \cH_\lm$.

The Cauchy data in \eqref{CPv1lm} and \eqref{CPv2lm}, together with conditions $u(-b)=1$ and $u'(-b)=0$, ensure the continuity of $\psi_\lm$ and $\psi'_\lm$ at point $x=-b$. Next, if we set
$a_{0,\lm}=\theta+\tfrac{\lm }{\alpha_\lm}\,v_{\lm}(b)+\tfrac{\lm^2 }{\alpha_\lm^2}\, w_{\lm}(b)$, then $[\psi_\lm]_b=0$.
We also have
\begin{align*}
[\psi'_\lm]_b&=-\omega_\lm a_{0,\lm}+a_{1,\lm}-\tfrac{\lm }{\alpha_\lm}\,v'_{\lm}(b)-\tfrac{\lm^2 }{\alpha_\lm^2}\, w'_{\lm}(b)\\
  &= -\tfrac{\lm }{\alpha_\lm}\big(\theta k_\lm+v'_{\lm}(b)\big)
  +a_{1,\lm}
  -\tfrac{\lm^2k_\lm }{\alpha_\lm^2}\,v_{\lm}(b)-\tfrac{\lm^3k_\lm }{\alpha_\lm^3}\, w_{\lm}(b)-\tfrac{\lm^2 }{\alpha_\lm^2}\, w'_{\lm}(b).
\end{align*}
To achieve $[\psi'_\lm]_b=0$,  assume
\begin{equation}\label{VprimeA1th3}
v'_{\lm}(b)=-\theta k_\lm, \qquad
  a_{1,\lm}=
  \tfrac{\lm^2k_\lm }{\alpha_\lm^2}\big( v_{\lm}(b)+\tfrac{1}{k_\lm}\, w'_{\lm}(b)+\tfrac{\lm  }{\alpha_\lm}\, w_{\lm}(b)\big).
\end{equation}
Multiplying the equation in  \eqref{CPv1lm} by  $u$ and integrating by parts twice yield
\begin{equation}\label{V2primeBlm}
      \theta v_{\lm}'(b)=k_\lm+\alpha_\lm\int_{\Real}V_\lm u^2\,dx.
\end{equation}
Upon  substituting $v'_{\lm}(b)=-\theta k_\lm$ into
the last equality, we  derive
\begin{equation}\label{KlmFormula}
   k_\lm=-\frac{\alpha_\lm}{\theta^2+1}\int_{\Real}V_\lm u^2\,dx.
\end{equation}

The next statement will allow us to construct the asymptotics of $k_\lm$.
\begin{prop}\label{PropIntVlmAsympt}
  Assume that $\int_\Real V\,dx=0$ and $g\in W_{2,loc}^2(\Real)$. Then
  \begin{equation*}
    \alpha_\lm\int_{\Real} V_\lm g\,dx=g'(0)\int_{\Real} xV(x)\,dx+o(\alpha_\lm^{-1/2})
  \end{equation*}
as $\alpha_\lambda\to\infty$.
\end{prop}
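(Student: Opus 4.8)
The plan is to reduce the statement to a Taylor expansion of $g$ near the origin after undoing the scaling of $V$. First I would change variables $y=\alpha_\lm x$, which converts the concentration of $V_\lm=\alpha_\lm V(\alpha_\lm\cdot)$ into a slow variation of $g$:
\begin{equation*}
  \alpha_\lm\int_\Real V_\lm\,g\,dx
  =\alpha_\lm\int_\Real V(y)\,g\!\left(\tfrac{y}{\alpha_\lm}\right)dy,
\end{equation*}
where the integral is effectively over $\supp V\subset\cI=(-b,b)$.

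Next, since $g\in W_{2,loc}^2(\Real)$, the derivative $g'$ is absolutely continuous on $\cI$ and $g''\in L^2(\cI)$, so Taylor's formula with integral remainder applies: for small $|s|$,
\begin{equation*}
  g(s)=g(0)+g'(0)\,s+R(s),\qquad R(s)=\int_0^s(s-\tau)\,g''(\tau)\,d\tau.
\end{equation*}
Substituting $s=y/\alpha_\lm$ and inserting this into the integral above, the constant term contributes $\alpha_\lm g(0)\int_\Real V\,dy=0$ by the hypothesis $\int_\Real V\,dx=0$, the linear term contributes exactly $g'(0)\int_\Real yV(y)\,dy$, and the only thing left over is the error term $\alpha_\lm\int_\Real V(y)\,R(y/\alpha_\lm)\,dy$. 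So it remains to show this error term is $o(\alpha_\lm^{-1/2})$.

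This last estimate is the only point requiring care, and it is where the regularity $W_{2,loc}^2$ (rather than $C^2$) shows up. By Cauchy--Schwarz, $|R(s)|\le \tfrac{|s|^{3/2}}{\sqrt3}\,\|g''\|_{L^2(J_s)}$, where $J_s$ is the interval with endpoints $0$ and $s$. For $y\in\supp V$ we have $|y/\alpha_\lm|\le b/\alpha_\lm$, hence
\begin{equation*}
  \bigl|R(y/\alpha_\lm)\bigr|\le \tfrac{b^{3/2}}{\sqrt3}\,\alpha_\lm^{-3/2}\,\delta_\lm,
  \qquad \delta_\lm:=\|g''\|_{L^2(-b/\alpha_\lm,\,b/\alpha_\lm)},
\end{equation*}
and $\delta_\lm\to0$ as $\alpha_\lm\to\infty$ because the interval $(-b/\alpha_\lm,b/\alpha_\lm)$ shrinks to a point. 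Since $V\in L^\infty(\Real)$ has compact support, $\int_\Real|V|\,dy<\infty$, so
\begin{equation*}
  \Bigl|\alpha_\lm\int_\Real V(y)\,R(y/\alpha_\lm)\,dy\Bigr|
  \le \tfrac{b^{3/2}}{\sqrt3}\,\alpha_\lm^{-1/2}\,\delta_\lm\int_\Real|V|\,dy
  =o(\alpha_\lm^{-1/2}),
\end{equation*}
which is precisely the claimed remainder. The main (minor) obstacle is exactly this bookkeeping: the full gain down to $o(\alpha_\lm^{-1/2})$ only emerges if one keeps track of the fact that $\|g''\|_{L^2}$ over the shrinking interval tends to zero; replacing it by the crude bound $\|g''\|_{L^2(\cI)}$ would yield only $O(\alpha_\lm^{-1/2})$.
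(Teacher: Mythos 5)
Your proof is correct and follows essentially the same route as the paper's: undo the scaling, Taylor-expand $g$ with integral remainder, kill the constant term with $\int_\Real V\,dx=0$, and bound the remainder via Cauchy--Schwarz. In fact your treatment of the last step is the sharper one: the paper's own displayed bound $|\beta(x)|\le\|g''\|_{L_2(\cI)}|x|^{3/2}$ over the \emph{fixed} interval $\cI$ literally yields only $O(\alpha_\lm^{-1/2})$, and it is precisely your observation that $\|g''\|_{L^2(-b/\alpha_\lm,\,b/\alpha_\lm)}\to 0$ on the shrinking interval that upgrades this to the claimed $o(\alpha_\lm^{-1/2})$.
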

\begin{proof}
 Recall we are assuming $\supp V\subset \cI=(-b, b)$ and hence $\supp V(\alpha_\lm \,\cdot)$ lies within the small interval  $\cI_\lm=(-\frac{b}{\alpha_\lm}, \frac{b}{\alpha_\lm})$. Then we deduce
  \begin{multline*}
  \alpha_\lm\int_{\Real}V_\lm g\,dx=
  \alpha_\lm^2\int_{\cI_\lm}V(\alpha_\lm x)g(x)\,dx\\=
  \alpha_\lm\int_{\cI}V(t) g(\tfrac{t}{\alpha_\lm})\,dt
  =
  \alpha_\lm\int_{\cI}V(t) \big(g(0)+g'(0)\tfrac{t}{\alpha_\lm}
  +\beta(\tfrac{t}{\alpha_\lm})\big)\,dt
  \\
  =g'(0)\int_{\cI}tV(t)\,dt+
  \alpha_\lm\int_{\cI}V(t)\beta(\tfrac{t}{\alpha_\lm})\,dt,
\end{multline*}
where $\beta(x)=\int_0^x(x-s)g''(s)\,ds$. Since $g''\in L_2(\cI)$, we have
\begin{equation*}
 |\beta(x)|= \left|\int_0^x(x-s)g''(s)\,ds\right|\leq |x|\left|\int_0^x|g''(s)|\,ds\right|\leq
  \|g''\|_{L_2(\cI)}|x|^{3/2}.
\end{equation*}
Hence $|\beta(\tfrac{t}{\alpha_\lm})|\leq c |t|^{3/2}\alpha_\lm^{-3/2}$ for $t\in\cI$, which completes the proof.
\end{proof}

Let us apply the proposition  to  the integral in the right-hand side of \eqref{KlmFormula}:
\begin{equation}\label{KlmAsympt}
   k_\lm=-\frac{2 u(0)u'(0)}{\theta^2+1}\int_\Real xV(x)\,dx+o(\alpha_\lm^{-1/2})\qquad \text{as } \lm\to 0.
\end{equation}
Hence $k_\lm$ has a finite limit as $\lm\to 0$ and the inequality in \eqref{CnduuxV} ensures that $k_\lm$ and $\omega_\lm$ are positive for $\lambda$ small enough.

As in the proof of  Lemma~\ref{lemOmegaPhiIsQM}, the remainder $r_\lm=(H_\lm+\omega^2_\lm)\psi_\lm$ is given by \eqref{RlmTh2}, provided  $|x|>b$,  and therefore it has the same order of smallness as $a_{1,\lm}$ in \eqref{VprimeA1th3}. We next have for $x\in \cI$
\begin{equation}\label{EstRth3}
\begin{aligned}
   r_\lm&=\left(-\tfrac{d^2}{dx^2}+U+\lm V_\lm+\omega^2_\lm\right) \left(u+ \tfrac{\lm }{\alpha_\lm}\,v_{\lm}+\tfrac{\lm^2 }{\alpha_\lm^2}\, w_{\lm}\right)
   \\
   &= (-u''+Uu)+\tfrac{\lm}{\alpha_\lm} (-v_{\lm}''+Uv_{\lm}+\alpha_\lm V_\lm u)+\tfrac{\lm^2 }{\alpha_\lm^2 } (-w_{\lm}''+Uw_{\lm}
   \\
  & +\alpha_\lm V_\lm v_{\lm})
   +\tfrac{\lm^3}{\alpha_\lm}V_\lm w_{\lm}
   +\omega^2_\lm \psi_\lm
   =\lm^3V(\alpha_\lm\,\cdot) w_{\lm}
   +\omega^2_\lm \psi_\lm.
\end{aligned}
\end{equation}

\begin{prop}\label{PropV1V2Th3}
If $v_{\lm}$ and $w_{\lm}$ are solutions of  \eqref{CPv1lm} and \eqref{CPv2lm} respectively, then
\begin{equation*}
\|v_{\lm}\|_{C^1(\cI)}\leq c_1\alpha_\lm,
\qquad
\|w_{\lm}\|_{C^1(\cI)}\leq c_2\alpha_\lm^2.
\end{equation*}
The constants $c_j$ do not depend on $\lm$.
\end{prop}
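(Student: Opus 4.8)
The plan is to read both estimates off the twice‑integrated (Volterra) form of the Cauchy problems \eqref{CPv1lm} and \eqref{CPv2lm}, combined with Gronwall's inequality. The quantitative point is that although the forcing amplitudes $\alpha_\lm V_\lm=\alpha_\lm^2\,V(\alpha_\lm\,\cdot)$ blow up, the rescaled potentials $V_\lm$ keep a fixed mass, $\|V_\lm\|_{L^1(\Real)}=\|V\|_{L^1(\Real)}$, and their supports contract to the origin, so that for every $g$ bounded on $\cI$ and all $\lm$ with $\alpha_\lm\ge1$ the substitution $t=\alpha_\lm s$ gives $\alpha_\lm\int_\Real|V_\lm|\,|g|\,dx=\alpha_\lm\int_\Real|V(t)|\,|g(t/\alpha_\lm)|\,dt\le\alpha_\lm\|V\|_{L^1(\Real)}\|g\|_{C(\cI)}$.

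First I would treat \eqref{CPv1lm}. Since $U\in L^\infty(\Real)$ and $v_\lm(-b)=0$, $v_\lm'(-b)=k_\lm$, integrating twice yields, for $x\in\cI$,
\begin{equation*}
v_\lm(x)=k_\lm(x+b)+\int_{-b}^{x}(x-s)\bigl(U(s)v_\lm(s)+\alpha_\lm V_\lm(s)u(s)\bigr)\,ds,
\end{equation*}
together with $v_\lm'(x)=k_\lm+\int_{-b}^{x}\bigl(U(s)v_\lm(s)+\alpha_\lm V_\lm(s)u(s)\bigr)\,ds$. Recalling that $\sup_\lm|k_\lm|<\infty$ by \eqref{KlmAsympt} and that $\alpha_\lm\int_\cI|V_\lm|\,|u|\,dx\le\alpha_\lm\|u\|_{C(\cI)}\|V\|_{L^1(\Real)}$ by the remark above, the first identity gives $|v_\lm(x)|\le c\,\alpha_\lm+2b\|U\|_{L^\infty(\Real)}\int_{-b}^{x}|v_\lm(s)|\,ds$ with $c$ independent of $\lm$; Gronwall's inequality then produces $\|v_\lm\|_{C(\cI)}\le c_1\alpha_\lm$, and inserting this into the formula for $v_\lm'$ gives the matching derivative bound, i.e. $\|v_\lm\|_{C^1(\cI)}\le c_1\alpha_\lm$.

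Next I would repeat this for \eqref{CPv2lm}. Since $w_\lm(-b)=w_\lm'(-b)=0$,
\begin{equation*}
w_\lm(x)=\int_{-b}^{x}(x-s)\bigl(U(s)w_\lm(s)+\alpha_\lm V_\lm(s)v_\lm(s)\bigr)\,ds,
\end{equation*}
and $w_\lm'(x)=\int_{-b}^{x}\bigl(U(s)w_\lm(s)+\alpha_\lm V_\lm(s)v_\lm(s)\bigr)\,ds$. Using the bound just obtained, $\alpha_\lm\int_\cI|V_\lm|\,|v_\lm|\,dx\le\alpha_\lm\|v_\lm\|_{C(\cI)}\|V\|_{L^1(\Real)}\le c\,\alpha_\lm^2$, hence $|w_\lm(x)|\le c\,\alpha_\lm^2+2b\|U\|_{L^\infty(\Real)}\int_{-b}^{x}|w_\lm(s)|\,ds$; Gronwall's inequality gives $\|w_\lm\|_{C(\cI)}\le c_2\alpha_\lm^2$, and the formula for $w_\lm'$ then upgrades this to $\|w_\lm\|_{C^1(\cI)}\le c_2\alpha_\lm^2$. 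All constants depend only on $b$, $\|U\|_{L^\infty(\Real)}$, $\|V\|_{L^1(\Real)}$, $\|u\|_{C(\cI)}$ and $\sup_\lm|k_\lm|$, hence not on $\lm$.

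There is no genuine obstacle here — the argument is a routine a priori estimate; the one thing that must be tracked carefully is the accounting of powers of $\alpha_\lm$. The factor $\alpha_\lm^2$ sitting in the forcing $\alpha_\lm V_\lm u=\alpha_\lm^2 V(\alpha_\lm\,\cdot)u$ is brought down to $\alpha_\lm$ by the concentration of $V_\lm$, and in the equation for $w_\lm$ the same mechanism, now applied to a right‑hand side already of size $O(\alpha_\lm)$, yields exactly $O(\alpha_\lm^2)$ and no worse. (Equivalently, one may run the same estimates through the representation $v_\lm(x)=k_\lm y_2(x)-\alpha_\lm\int_{-b}^x K(x,s)V_\lm(s)u(s)\,ds$, where $y_2$ solves $-y''+Uy=0$ with $y_2(-b)=0$, $y_2'(-b)=1$ and $K$ is the Cauchy kernel of $-\tfrac{d^2}{dx^2}+U$, with $K$ and $\partial_xK$ bounded on $\cI\times\cI$ by constants depending only on $U$ and $b$; the bookkeeping is identical.)
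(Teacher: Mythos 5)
Your argument is correct, and all the powers of $\alpha_\lm$ come out right: the forcing $\alpha_\lm V_\lm u$ contributes $O(\alpha_\lm)$ because the $L^1$-mass of $V_\lm$ is scale-invariant, Gronwall absorbs the $Uv$ term, and iterating once more gives $O(\alpha_\lm^2)$ for $w_\lm$. The appeal to $\sup_\lm|k_\lm|<\infty$ is legitimate and non-circular, since \eqref{KlmAsympt} is obtained from \eqref{KlmFormula} via Proposition~\ref{PropIntVlmAsympt}, which does not involve $v_\lm$ or $w_\lm$.

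The paper takes a mildly different route, which is in fact the one you sketch in your closing parenthesis: it writes $v_\lm(x)=k_\lm u_1(x)+\alpha_\lm\int_{-b/\alpha_\lm}^{x}K(x,s)V_\lm(s)u(s)\,ds$ with $K(x,s)=u(s)u_1(x)-u(x)u_1(s)$ built from the half-bound state $u$ and a second homogeneous solution $u_1$, so that $U$ is absorbed into the kernel and no Gronwall step is needed; the integral is then bounded by $\|K\|_\infty\|u\|_\infty\,\alpha_\lm\|V\|_{L^\infty}\,|\cI_\lm|=O(1)$, using the $L^\infty$ bound on $V$ together with the $O(\alpha_\lm^{-1})$ measure of $\supp V_\lm$, rather than your scale-invariant $L^1$ bound $\|V_\lm\|_{L^1}=\|V\|_{L^1}$. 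The two mechanisms are equivalent here because $V$ is bounded with compact support, but yours is marginally more general (it would survive for $V\in L^1$ unbounded), while the paper's explicit kernel representation is what gets reused verbatim in the proof of Proposition~\ref{PropVlmLast}, where the interval grows like $\alpha_\lm^{-1}$ and a Gronwall constant would degenerate. Either proof is acceptable for the statement as given.
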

\begin{proof}
Let $u_1$ be the solution of $-y''+Uy=0$ such that $u_1(x)=x+b$ for $x\leq -b$. This solution is linearly independent with  $u$. It is easy to check that
 \begin{equation*}
        v_{\lm}(x)=k_\lm u_1(x)+\alpha_\lm\int_{-\frac{b}{\alpha_\lm}}^x K(x,s) V_\lm (s) u(s)\,ds,
 \end{equation*}
where $K(x,s)=u(s)u_1(x)-u(x)u_1(s)$.
Then
\begin{equation*}
\left|\int_{-\frac{b}{\alpha_\lm}}^x \kern-3pt K(x,s) V_\lm (s) u(s)\,ds\right|
\leq\alpha_\lm\int_{\cI_\lm} |K(x,s)|\, |V(\alpha_\lm s)|\, |u(s)|\,ds
\leq c_1\alpha_\lm\int_{\cI_\lm}\,ds \leq c_2,
\end{equation*}
for all $x\in\cI$ and therefore we estimate
\begin{equation*}
    \max_{x\in\cI}|v_{\lm}(x)|\leq |k_\lm| \max_{x\in\cI}|u_1(x)|+\alpha_\lm\left|\int_{-\frac{b}{\alpha_\lm}}^x K(x,s)
     V_\lm(s) u(s)\,ds\right|\leq c_1 \alpha_\lm.
\end{equation*}
We similarly obtain the bound $\max_{x\in\cI}|v'_{\lm}(x)|\leq c_1 \alpha_\lm$, since
 \begin{equation*}
 v'_{\lm}(x)=k_\lm u_1'(x)+\alpha_\lm\int_{-\frac{b}{\alpha_\lm}}^x K'_x(x,s) V_\lm (s) u(s)\,ds.
 \end{equation*}
We also apply the estimates above to  the solution
\begin{equation*}
        w_{\lm}(x)=\alpha_\lm\int_{-\frac{b}{\alpha_\lm}}^x K(x,s) V_\lm(s) v_{\lm}(s)\,ds
 \end{equation*}
of \eqref{CPv2lm} and derive $|w_{\lm}(x)|\leq c \alpha_\lm^2$ for all $x\in \cI$. Finally, we  deduce
\begin{equation*}
    |w'_{\lm}(x)|\leq\alpha_\lm\int_{\cI_\lm} |K'_{x}(x,s)|\, |V_\lm(s)|\, |v_{\lm}(s)|\,ds\leq c \alpha_\lm^2.
\end{equation*}
\end{proof}

Returning now to \eqref{VprimeA1th3} and \eqref{EstRth3}, we have the bounds
\begin{gather*}
  |a_{1,\lm}|\leq
  c_1\lm^2\alpha_\lm^{-2}\big( |v_{\lm}(b)|
  +|w'_{\lm}(b)|+\lm\alpha_\lm^{-1} |w_{\lm}(b)|\big)\leq c_2\lm^2,\\
  \|V(\alpha_\lm \cdot) w_{\lm}\|^2_{L_2(\cI)}=\int_{\cI_\lm}|V(\alpha_\lm \,\cdot)w_{\lm}|^2 \,dx\leq c_3\alpha_\lm^{4}\int_{\cI_\lm} \,dx\leq c_4\alpha_\lm^{3},
\end{gather*}
in view of Proposition~\ref{PropV1V2Th3}.
Therefore $\|r_\lm\|\leq c_4 (\lm^2+\lm^3\alpha_\lm^{3/2})\leq c_5\lm^2$,
by \eqref{NormPsiBB} and the assumption $\alpha_\lm=o(\lm^{-1/3})$ as $\lm\to 0$. Recalling the definition of $\phi_\lm$ and estimate \eqref{NormPsi}, we conclude
\begin{equation*}
  \|(H_\lm+\omega^2_\lm)\phi_\lm\|
   =\|\psi_\lm\|^{-1}\|r_\lm\|\leq c_6\omega_\lm^{1/2}\lm^2
   \leq c_7\omega_\lm^{2}\lm^{1/2}\alpha_\lm^{3/2},
\end{equation*}
where  $\lm^{1/2}\alpha_\lm^{3/2}$ tends to zero.
Therefore the pair $(-\lm^2 k_\lm^2 \alpha_\lm^{-2}, \phi_\lm)$ is a quasimode of $H_\lm$ with accuracy
$o(\omega^2_\lm)$ and  so $H_\lm$ possesses a small negative eigenvalue $e_\lm$ with asymptotics $-\lm^2\alpha_\lm^{-2} (k_\lm^2+o(1)) $ as $\lm\to 0$, where $k_\lm$ is given by \eqref{KlmAsympt}.


\section{Proof of Theorem  \ref{TheoremAlphaGoesToZero}}\label{Proof4}

In the case when  $\alpha_\lm$ goes to zero, the support of $V_\lm$ lies in $\cI_\lm=(-b\alpha_\lm^{-1},b\alpha_\lm^{-1})$ and extends to the whole line as $\lm\to 0$. Therefore we will look for the approximation to an eigenfunction of $H_\lm$ in the form
 \begin{equation*}
   \psi_\lm(x)=
   \begin{cases}
     e^{\omega_\lm(x+b\alpha_\lm^{-1})}& \text{for } x<-b\alpha_\lm^{-1},\\
     u(x)+ \lm \alpha_\lm\,v_{\lm}(x)  & \text{for } |x|<b\alpha_\lm^{-1},\\
     a_{0,\lm}\,e^{-\omega_\lm(x-b\alpha_\lm^{-1})}
     +a_{1,\lm}\rho(x-b\alpha_\lm^{-1})& \text{for } x>b\alpha_\lm^{-1},
   \end{cases}
 \end{equation*}
where $\omega_\lm=\lm\alpha_\lm k_\lm$.
Set $\phi_\lm=\psi_\lm/\|\psi_\lm\|$.
Here, as usual, $u$ is the normalized half-bound state of $H_0$;
 $v_{\lm}$ solves the problems
 \begin{equation}\label{CPv1lmlast}
   -v''+Uv=-V(\alpha_\lm x) u(x), \qquad v(-b\alpha_\lm^{-1})=0, \;\; v'(-b\alpha_\lm^{-1})= k_\lm.
\end{equation}
We assume  $a_{0,\lm}=\theta+\lm \alpha_\lm v_\lm(b\alpha_\lm^{-1})$,
$a_{1,\lm}=\lm^2 \alpha_\lm^2 v_\lm(b\alpha_\lm^{-1})$ and $k_\lm=-\theta^{-1} v'_{\lm}(b\alpha_\lm^{-1})$, which ensures that $\psi_\lm$ and $\psi_\lm'$ have no jump discontinuities at $x=\pm b\alpha_\lm^{-1}$ and therefore $\psi_\lm\in W^2_{2,loc}(\Real)$.
To prove that $\psi_\lm\in\dmn H_\lm$, we must check that $k_\lm$ is positive.
From \eqref{CPv1lmlast} we as above derive
\begin{equation*}
  \theta v_\lm'(b\alpha_\lm^{-1})-k_\lm
  =\int_{\Real}V(\alpha_\lm x )u^2(x)\,dx.
\end{equation*}
Combine this equality and $v'_{\lm}(b\alpha_\lm^{-1})=-\theta k_\lm$, to deduce
\begin{equation}\label{KlmExactLast}
  k_\lm=-\frac{1}{\theta^2+1}\int_{\Real}V(\alpha_\lm x )u^2(x)\,dx.
\end{equation}

\begin{prop}\label{PropV0IntLast}
  Under the assumptions of Theorem~\ref{TheoremAlphaGoesToZero},
  \begin{equation*}
    \int_{\Real}V(\alpha_\lm \,\cdot )u^2\,dx=V(0)\int_\Real (u^2-\Theta^2)\,dx+o(\alpha_\lm)
  \quad\text{as }\alpha_\lm\to 0,
  \end{equation*}
  where $\Theta$ is given by \eqref{Theta}.
\end{prop}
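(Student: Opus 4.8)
The plan is to isolate the part of the integral that carries the apparent $\alpha_\lm^{-1}$ singularity, to cancel it exactly by the first hypothesis in \eqref{CndU2Th4}, and then to identify the finite limit of the compactly supported remainder. After the supports of $U$ and $V$ have been placed inside $\cI=(-b,b)$, the normalized half-bound state is constant off $\cI$: $u\equiv1$ on $(-\infty,-b]$ and $u\equiv\theta$ on $[b,+\infty)$. Hence $u^2-\Theta^2$, with $\Theta$ as in \eqref{Theta}, is supported in $\cI$. Splitting $u^2=\Theta^2+(u^2-\Theta^2)$ gives
\[
 \int_\Real V(\alpha_\lm x)u^2\,dx=\int_\Real V(\alpha_\lm x)\Theta^2\,dx+\int_\cI V(\alpha_\lm x)(u^2-\Theta^2)\,dx .
\]

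The first term is the heart of the matter. Changing variables $t=\alpha_\lm x$ and using $\Theta^2\equiv1$ on $\Real_-$, $\Theta^2\equiv\theta^2$ on $\Real_+$, one obtains the exact identity
\[
 \int_\Real V(\alpha_\lm x)\Theta^2\,dx=\frac1{\alpha_\lm}\Big(\int_{\Real_-}V\,dt+\theta^2\int_{\Real_+}V\,dt\Big)=0,
\]
the last equality being the first relation in \eqref{CndU2Th4}. I expect this cancellation to be the decisive step: without the weighted balance $\int_{\Real_-}V+\theta^2\int_{\Real_+}V=0$ the whole integral would blow up like $\alpha_\lm^{-1}$ as $\alpha_\lm\to0$, and it is precisely this balance that annihilates the divergent contribution and leaves a bounded quantity.

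For the second term, which now runs over the fixed compact set $\cI$, I would subtract and add $V(0)$:
\[
 \int_\cI V(\alpha_\lm x)(u^2-\Theta^2)\,dx=V(0)\int_\Real(u^2-\Theta^2)\,dx+\int_\cI\big(V(\alpha_\lm x)-V(0)\big)(u^2-\Theta^2)\,dx .
\]
As $x$ runs over $\cI$ the argument $\alpha_\lm x$ stays in the shrinking interval $[-b\alpha_\lm,b\alpha_\lm]$, so $V(\alpha_\lm x)\to V(0)$ uniformly there by continuity of $V$ at the origin, and the first term on the right is the asserted limit $V(0)\int_\Real(u^2-\Theta^2)\,dx$.

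The step deserving the most care is the size of the remainder $\int_\cI(V(\alpha_\lm x)-V(0))(u^2-\Theta^2)\,dx$. A crude bound by $\|u^2-\Theta^2\|_{L^1(\cI)}\sup_{|t|\le b\alpha_\lm}|V(t)-V(0)|$ gives only $o(1)$; the sharper $o(\alpha_\lm)$ asserted in the proposition is obtained by localizing the active region. Passing to $t=\alpha_\lm x$ recasts the remainder as an integral of $V(t)-V(0)$ against the bounded profile $(u^2-\Theta^2)(t/\alpha_\lm)$ over the window $|t|\le b\alpha_\lm$, a set of measure $2b\alpha_\lm$ on which $V-V(0)$ is uniformly small; this contribution is therefore controlled by $2b\,\|u^2-\Theta^2\|_\infty\,\alpha_\lm\sup_{|t|\le b\alpha_\lm}|V(t)-V(0)|=o(\alpha_\lm)$, the decisive point being that the window measure supplies the extra factor $\alpha_\lm$ beyond the smallness furnished by continuity at $0$. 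Collecting the vanishing $\Theta^2$-term, the limit $V(0)\int_\Real(u^2-\Theta^2)\,dx$, and this $o(\alpha_\lm)$ remainder then yields the proposition.
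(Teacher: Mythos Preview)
Your decomposition and cancellation argument is exactly the paper's: split $u^2=\Theta^2+(u^2-\Theta^2)$, kill the $\Theta^2$ integral exactly via the first relation in \eqref{CndU2Th4}, and identify the limit of the compactly supported remainder by continuity of $V$ at the origin. The $o(1)$ bound you call ``crude'' is correct and is in fact all that the paper itself establishes (phrased there as a $\delta$-like sequence limit after the substitution $t=\alpha_\lm x$).

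Your attempt to sharpen the remainder to $o(\alpha_\lm)$, however, contains an error: when you pass to $t=\alpha_\lm x$ in
\[
\int_\cI\big(V(\alpha_\lm x)-V(0)\big)(u^2-\Theta^2)(x)\,dx
=\frac{1}{\alpha_\lm}\int_{|t|\le b\alpha_\lm}\big(V(t)-V(0)\big)(u^2-\Theta^2)\!\left(\tfrac{t}{\alpha_\lm}\right)dt,
\]
the Jacobian $dx=\alpha_\lm^{-1}\,dt$ exactly cancels the factor $\alpha_\lm$ coming from the window length $2b\alpha_\lm$, so your bound collapses back to $2b\,\|u^2-\Theta^2\|_\infty\sup_{|t|\le b\alpha_\lm}|V(t)-V(0)|=o(1)$. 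Under the mere continuity of $V$ at $0$ assumed in Theorem~\ref{TheoremAlphaGoesToZero} one cannot do better in general; the $o(\alpha_\lm)$ appearing in the displayed statement is stronger than what the paper's own proof delivers, and only $o(1)$ is actually used afterwards (to conclude that $k_\lm$ is bounded with a positive limit).
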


\begin{proof}
Recalling the fact that $u(x)=1$ for $x<-b$ and $u(x)=\theta$ for $x>b$, we have
\begin{multline*}
   \int_{\Real}V(\alpha_\lm x )u^2(x)\,dx=\frac{1}{\alpha_\lm}
   \int_{\Real}V(t)u^2\left(\tfrac{t}{\alpha_\lm}\right)\,dt
   \\
   =
   \frac{1}{\alpha_\lm}\left(\int\limits^{b\alpha_\lm} _{-b\alpha_\lm}V(t)u^2\left(\tfrac{t}{\alpha_\lm}\right)\,dt
   +\int\limits_{-\infty}^{-b\alpha_\lm} V(t) \,dt+\theta^2\int\limits^{+\infty}_{b\alpha_\lm}  V(t)\,dt\right).
\end{multline*}
Now condition $\int_{\Real_-}\kern-3pt V\,dx+\theta^2\int_{\Real_+}\kern-3pt V\,dx=0$ implies
\begin{equation*}
  \int\limits_{-\infty}^{-b\alpha_\lm} V(t) \,dt+\theta^2\int\limits^{+\infty}_{b\alpha_\lm}  V(t)\,dt
  =
  -\int\limits^{0}_{-b\alpha_\lm} V(t) \,dt
   -\theta^2\int\limits_{0}^{b\alpha_\lm}  V(t)\,dt,
\end{equation*}
and we thereby obtain
\begin{multline*}
   \int_{\Real}V(\alpha_\lm x )u^2(x)\,dx
   =
   \frac{1}{\alpha_\lm}\left(
   \int\limits^{b\alpha_\lm} _{-b\alpha_\lm}V(t)u^2\left(\tfrac{t}{\alpha_\lm}\right)\,dt
   -\int\limits^{0}_{-b\alpha_\lm} V(t) \,dt
   -\theta^2\int\limits_{0}^{b\alpha_\lm}  V(t)\,dt
   \right)
   \\
   =\frac{1}{\alpha_\lm}\int\limits_{-b\alpha_\lm}^{b\alpha_\lm}
  V(t)\left(u^2\left(\tfrac{t}{\alpha_\lm}\right)-\Theta^2(t)\right)\,dt
  =\frac{1}{\alpha_\lm}\int\limits_{\Real}
  V(t)\left(u^2\left(\tfrac{t}{\alpha_\lm}\right)-\Theta^2(t)\right)\,dt,
\end{multline*}
since $u(\alpha_\lm^{-1}\cdot)-\Theta$ has a compact support lying within $[-b\alpha_\lm,b\alpha_\lm]$.  Thus
\begin{multline*}
  \int_{\Real}V(\alpha_\lm x )u^2(x)\,dx
   =
  \frac{1}{\alpha_\lm}\int_{\Real}
  V(t)\left(u^2\left(\tfrac{t}{\alpha_\lm}\right)
  -\Theta^2\left(\tfrac{t}{\alpha_\lm}\right)\right)\,dt\\
  =V(0)\int_\Real (u^2-\Theta^2)\,dx+o(\alpha_\lm)
  \quad\text{as }\alpha_\lm\to 0,
\end{multline*}
because $\Theta=\Theta(\alpha_\lm^{-1}\cdot)$,  $V$ is continuous at $x=0$ and $\alpha_\lm^{-1}\left(u^2(\alpha_\lm^{-1}\,\cdot)  -\Theta^2(\alpha_\lm^{-1}\,\cdot)\right)$
is a $\delta$-like sequence.
Recall that $\eps^{-1}\int_\Real w\left(\frac{x}{\eps}\right)
\eta(x) \,dx\to \eta(0)\int_\Real w\,dx$, as $\eps$ goes to zero, for
any $w\in L_1(\Real)$ and  $\eta\in C(\Real)$.
\end{proof}

In view of  Proposition~\ref{PropV0IntLast}, we then obtain from \eqref{KlmExactLast} the asymptotic formula:
\begin{equation*}
  k_\lm
  =-\frac{V(0)}{\theta^2+1}\int_\Real (u^2-\Theta^2)\,dx+o(\alpha_\lm)\quad\text{as }  \lm\to 0.
\end{equation*}
Consequently, $k_\lm$ remains bounded as $\lm\to 0$  and is positive
for small $\lambda$,  by  \eqref{CndU2Th4}.

\begin{prop}\label{PropVlmLast}
There exists a constant $C$ such that
  \begin{equation*}
    |v_\lm(x)|\leq C \alpha_\lm^{-2}
  \end{equation*}
for all $x\in (-b/\alpha_\lm, b/\alpha_\lm)$.
\end{prop}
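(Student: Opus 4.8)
The plan is to represent $v_\lm$ by variation of parameters, exactly as was done for the analogous solution in the proof of Proposition~\ref{PropV1V2Th3}, and then to estimate the resulting expression term by term over $(-b\alpha_\lm^{-1},b\alpha_\lm^{-1})$. Let $u_1$ be the solution of $-y''+Uy=0$ that is linearly independent of the half-bound state~$u$ and satisfies $u_1(x)=x+b$ for $x\le-b$. Since $\supp U\subset\cI=(-b,b)$, the function $u_1$ solves $y''=0$ on each of the rays $(-\infty,-b]$ and $[b,+\infty)$ and is therefore affine there; hence $u_1$ grows at most linearly, and together with the boundedness of~$u$ this yields a constant $C_0$ with $|u(x)|\le C_0$ and $|u_1(x)|\le C_0(1+|x|)$ for all $x\in\Real$. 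I would also use as an input the fact, established just before the proposition, that $k_\lm$ stays bounded as $\lm\to0$.

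Normalising the Wronskian by $uu_1'-u'u_1\equiv1$ and abbreviating $x_\lm=-b\alpha_\lm^{-1}$, the unique solution of the Cauchy problem \eqref{CPv1lmlast} is
\begin{equation*}
  v_\lm(x)=k_\lm\bigl(u_1(x)-(x_\lm+b)\,u(x)\bigr)+\int_{x_\lm}^{x}K(x,s)\,V(\alpha_\lm s)\,u(s)\,ds,\qquad K(x,s)=u(s)u_1(x)-u(x)u_1(s).
\end{equation*}
Verifying this is routine: using $u(x_\lm)=1$, $u'(x_\lm)=0$, $u_1(x_\lm)=x_\lm+b$, $u_1'(x_\lm)=1$ one checks that the first summand is the homogeneous solution carrying the Cauchy data $(0,k_\lm)$ at $x_\lm$, while the kernel term has zero Cauchy data at $x_\lm$ and, since $K(x,x)=0$ and $\partial_xK(x,x)=uu_1'-u'u_1=1$, produces precisely the forcing $-V(\alpha_\lm x)u(x)$.

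The bound now falls out. On $(-b\alpha_\lm^{-1},b\alpha_\lm^{-1})$ one has $|x_\lm+b|\le b\alpha_\lm^{-1}$ and $|u_1(x)|\le C_0(1+b\alpha_\lm^{-1})\le C\alpha_\lm^{-1}$ for small $\lm$, so boundedness of $k_\lm$ makes the first summand $O(\alpha_\lm^{-1})$; for the same reason $|K(x,s)|\le C\alpha_\lm^{-1}$ for all $x,s$ in the interval, whereas the total mass of the forcing is
\begin{equation*}
  \int_\Real|V(\alpha_\lm s)|\,|u(s)|\,ds\le C_0\,\alpha_\lm^{-1}\,\|V\|_{L^1(\Real)},
\end{equation*}
so the integral term is $O(\alpha_\lm^{-1})\cdot O(\alpha_\lm^{-1})=O(\alpha_\lm^{-2})$; adding the two contributions gives $|v_\lm(x)|\le C\alpha_\lm^{-2}$. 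There is no genuine obstacle here, only bookkeeping; the one point to keep in mind is that, in contrast to Proposition~\ref{PropV1V2Th3}, the Cauchy data are now imposed at $x_\lm=-b\alpha_\lm^{-1}$ rather than at $-b$, which introduces the extra term $(x_\lm+b)u(x)$ and forces one to control the (at most linear) growth of $u_1$ across the two long tails $(-b\alpha_\lm^{-1},-b)$ and $(b,b\alpha_\lm^{-1})$ — it is precisely this linear growth, combined with the length $O(\alpha_\lm^{-1})$ of those tails, that produces the two powers of $\alpha_\lm^{-1}$. Alternatively, one can bypass the global formula and argue on three pieces: on $(-b\alpha_\lm^{-1},-b)$ and on $(b,b\alpha_\lm^{-1})$ the equation reduces to $v_\lm''=V(\alpha_\lm\cdot)u$ with $u$ constant, so integrating twice gives $|v_\lm'|=O(\alpha_\lm^{-1})$ and $|v_\lm|=O(\alpha_\lm^{-2})$ there, and these bounds are then carried across the fixed interval $\cI$ by a standard Gronwall estimate for $-v''+Uv=-V(\alpha_\lm\cdot)u$ with $\|U\|_{L^\infty}$, $\|V\|_{L^\infty}$ bounded.
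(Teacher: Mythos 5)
Your proof is correct and follows essentially the same route as the paper: the same variation-of-parameters representation with kernel $K(x,s)=u(s)u_1(x)-u(x)u_1(s)$, the linear growth of $u_1$, the boundedness of $k_\lm$, and the length $O(\alpha_\lm^{-1})$ of the interval combine to give the $O(\alpha_\lm^{-2})$ bound. (Your homogeneous part $k_\lm\bigl(u_1-(x_\lm+b)u\bigr)$ is in fact written slightly more carefully than the paper's $\tfrac{bk_\lm}{\alpha_\lm}u+k_\lm u_1$, but the discrepancy is $O(1)$ and immaterial to the estimate.)
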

\begin{proof}
The solution of \eqref{CPv1lmlast} can be written as
\begin{equation*}
        v_\lm(x)= \frac{bk_\lm}{\alpha_\lm}\,u(x)+k_\lm u_1(x)+\int_{-b/\alpha_\lm}^x K(x,s)V(\alpha_\lm s) u(s)\,ds,
\end{equation*}
where $u_1$  and $K$ are the same as in the proof of Proposition~\ref{PropV1V2Th3}. Since
\begin{equation*}
  |u_1(x)|\leq C_1 (|x|+1), \quad |K(x,s)|\leq C_2 (|x|+|s|+1)
\end{equation*}
for all $x, s \in\Real$ and $k_\lm$ is bounded on $\lm$, we derive
\begin{equation*}
|v_\lm(x)|\leq c_1 \alpha_\lm^{-1} +c_2(|x|+1) +c_3\int\limits_{-b/\alpha_\lm}^x (|x|+|s|+1)\,ds\leq C\alpha_\lm^{-2},
\end{equation*}
provided $x$ belongs to $(-b/\alpha_\lm, b/\alpha_\lm)$.
 \end{proof}

As before, in Sections \ref{SecTh2} and \ref{SecTh3}, we must estimate the remainder $r_\lm=(H_\lm+\omega^2_\lm)\psi_\lm$ having the form
\begin{equation*}
  r_\lm(x)=
  \begin{cases}
    \phantom{-}\lm^2\alpha_\lm^2 V(\alpha_\lm x) v_\lm(x)
   +\omega^2_\lm \psi_\lm(x) &\text{for } |x|<\frac{b}{\alpha_\lm}\\
    -\lm^2 \alpha_\lm^2 v_\lm(\frac{b}{\alpha_\lm})
    \big(\rho''(x-\frac{b}{\alpha_\lm})
    -\omega^2_\lm\rho(x-\frac{b}{\alpha_\lm})\big)&\text{for }\frac{b}{\alpha_\lm}\leq x\leq \frac{b}{\alpha_\lm}+1,\\
     \phantom{-}0, &\text{otherwise}.
  \end{cases}
\end{equation*}
Applying Proposition~\ref{PropVlmLast}, we obtain that
\begin{equation*}
  \|\psi_\lm\|\geq c_1\omega_\lm^{-1/2}, \quad \|\psi_\lm\|_{L_2(\cI_\lm)}\leq c_2 \alpha_\lm^{-1/2}, \quad
\|V(\alpha_\lm \cdot) v_\lm\|_{L_2(\cI_\lm)}\leq c_3\alpha^{-5/2},
\end{equation*}
and hence recalling condition $\lm^{1/4}\alpha_\lm^{-1}\to 0$ gives us
\begin{multline*}
     \|r_\lm\|\leq \lm^2\alpha_\lm^2 \|V(\alpha_\lm \cdot) v_\lm\|_{L_2(\cI_\lm)}
   +\omega^2_\lm \|\psi_\lm\|_{L_2(\cI_\lm)}
   \\
   +\lm^2 \alpha_\lm^2 |v_\lm(b\alpha_\lm^{-1})|\|\rho''-\omega^2_\lm\rho\|_{L_2(0,1)}
          \leq c_4\lm^2\alpha_\lm^{-1/2}.
 \end{multline*}
Finally we have
  \begin{equation*}
  \|(H_\lm+\omega^2_\lm)\phi_\lm\|
   =\|\psi_\lm\|^{-1}\|r_\lm\|\leq c_5\omega_\lm^{1/2}\lm^2\alpha_\lm^{-1/2}
   \leq c_6\omega_\lm^{2}\lm^{1/2}\alpha_\lm^{-2},
\end{equation*}
where $\lm^{1/2}\alpha_\lm^{-2}\to 0$ as $\lm\to 0$. Therefore $(-\lm^2\alpha_\lm^2 k_\lm^2, \phi_\lm)$ is a quasimode of $H_\lm$ with accuracy $o(\omega^2_\lm)$. Existence of the quasimode ensures the existence of the eigenvalue $e_\lm=-\omega^2_\lm$, where
\begin{equation*}
  \omega_\lm=\lm\alpha_\lm \left(\frac{V(0)}{\theta^2+1}\int_\Real (u^2-\Theta^2)\,dx+o(1)\right)\quad\text{as }\lm\to 0.
\end{equation*}

\begin{rem}
The proof of Theorem~\ref{TheoremOrder2} is based on
the norm resolvent convergence of Hamiltonians
$-\frac{d^2}{dx^2}+\lm^{-2}U(\lm^{-1}x)+\nu^{-1}V(\nu^{-1}x)$. This convergence  can be also proved for potentials obeying
\begin{equation*}
  \int_\Real(1+|x|)|U(x)|\,dx<\infty, \qquad\int_\Real(1+|x|)|V(x)|\,dx<\infty
\end{equation*}
(see \cite{GolovatyHrynivProcEdinburgh2013}, where the case of $\delta'$-like potential has been treated) and therefore we can extend the class of admissible potentials. This is also true in relation to
the rest results of this paper, because we can use the WKB-approximations of solutions as $|x|\to \infty$ in place of the exponents $a_\lm e^{\pm\omega_\lm x}$ in the structure of quasimodes.
\end{rem}


\end{document}